\def\t{\widetilde}
\def\b{\mathbf}
\newcommand{\x}{\ensuremath{\mathbf{x}}}
\newcommand{\y}{\ensuremath{\mathbf{y}}}
\newcommand{\e}{\ensuremath{\mathbf{e}}}
\newcommand{\proj}{\ensuremath{\pi}}
\newcommand{\slices}{\ensuremath{\widetilde{\mathcal{S}}}}
\newcommand{\eslices}{\ensuremath{\mathcal{S}}}
\newcommand{\cslices}{\ensuremath{\mathcal{S}_c}}
\newcommand{\rel}{\trianglelefteq}
\newcommand{\srel}{\vartriangleleft}
\newcommand{\notrel}{\not \rel}
\DeclareMathOperator{\rk}{rank}
\DeclareMathOperator{\id}{id}
\newcommand{\rr}{\ensuremath{\mathbb{R}}}
\newcommand{\zz}{\ensuremath{\mathbb{Z}}}
\theoremstyle{plain}
\newtheorem{thm}{Theorem}[section]
\newtheorem{cor}[thm]{Corollary}
\newtheorem{lem}[thm]{Lemma}
\newtheorem{prop}[thm]{Proposition}
\theoremstyle{definition}
\newtheorem{defn}[thm]{Definition}
\theoremstyle{remark}
\newtheorem{exam}[thm]{Example}
\numberwithin{equation}{section} 
\begin{document}

\title{A Partial Ordering on Slices of Planar Lagrangians}

\author[P. Eiseman]{Phil Eiseman} \address{Haverford College,
  Haverford, PA 19041} \email{peiseman@haverford.edu}

\author[J. Lima]{Jonathan D. Lima} \address{Haverford College,
  Haverford, PA 19041} \email{jlima@haverford.edu}

\author[J. Sabloff]{Joshua M. Sabloff} \address{Haverford College,
  Haverford, PA 19041} \email{jsabloff@haverford.edu}

\author[L. Traynor]{Lisa Traynor} \address{Bryn Mawr College, Bryn
  Mawr, PA 19010} \email{ltraynor@brynmawr.edu}

\dedicatory{To V.I. Arnold,  with respect and appreciation.} 
 
\begin{abstract}
  A collection of simple closed curves in $\rr^3$ is called a negative
  slice if it is the intersection of a flat-at-infinity planar
  Lagrangian surface and $\{ y_2 = a \}$ for some $a < 0$.  Examples
  and non-examples of negative slices are given.  Embedded Lagrange cobordisms
  define a relation on slices and in some (and perhaps all) cases
  this relation defines a partial order.  The set of slices is a
  commutative monoid and the additive structure has an interesting
  relationship with the ordering relation.
\end{abstract}

\thanks{PE and JDL were supported as undergraduate summer
research students by the Haverford College faculty support fund.}

\maketitle


\section{Introduction}
\label{sec:intro}

As part of his study of geometric optics, Arnold introduced the notion
of an immersed Lagrange cobordism in $T^*(B \times [0,1])$ between
immersed Lagrangians in the projection of the boundary to $T^*(B
\times \{0\})$ and $T^*(B \times \{1\})$,
\cite{arnold:cobordism-1}. The equivalence classes of oriented
immersed Lagrangians up to immersed Lagrange cobordism form a group,
which Arnold computed in the case of Lagrangian curves in $\rr^2$ to
be $\zz \oplus \rr$, where the $\zz$ records the Maslov class and the
$\rr$ records the signed area bounded by the curve.  Note that these
invariants are both homological in nature; in general, immersed
Lagrange cobordism obeys an $h$-principle \cite{yasha:cobordism}, and
hence computations of the groups can be approached using algebraic
topology (see \cite{audin:cobordism}).  That is, immersed Lagrange
cobordism is a ``flexible'' phenomenon in symplectic topology.
Passing to \emph{embedded} Lagrange cobordism between \emph{embedded}
Lagrangians, in contrast, yields a ``rigid'' theory: embedded
Lagrangians in $\rr^2$ are simply circles, and Chekanov proved that
all cobordisms must be cylinders between circles of equal area
\cite{chv:cobordism}.  In particular, the cobordism group is the free
abelian group generated by $\rr_{>0}$. Further rigidity is evidenced
by Eliashberg's result that there is a \emph{unique} Lagrange
cobordism (up to isotopy) between two circles of the same area
\cite{yasha:lagr-cyl}.  In this paper, we will consider an
intermediate situation: we insist that the cobordisms be embedded, but
the Lagrangians at the ends may be immersed (in fact, we will record
the three-dimensional configuration of the boundary).  As we shall
see, cobordisms give rise to a relation between their ends that is no
longer an equivalence relation, but rather a partial order in certain
(and perhaps all) cases.
 
More precisely, the objects we study are ``slices'' of
``flat-at-infinity'' Lagrangian submanifolds of $\rr^{4}$.  Consider
$\rr^{4}$ with the standard symplectic form $dx_1 \wedge dy_1 + dx_2
\wedge dy_2$.  Let $L_0$ denote the Lagrangian $x_1x_2$-plane in
$\rr^{4}$, which can also be thought of as the zero-section of
$T^*\rr^2$.  We say that an embedded Lagrangian submanifold is {\bf
  planar} if it is diffeomorphic to $\rr^2$; a planar Lagrangian is
{\bf flat-at-infinity} if it agrees with $L_0$ outside a compact
subset of $\rr^4$.  For example, the graph of the differential of any
compactly supported smooth function $F: \rr^2 \to \rr$ is a
flat-at-infinity planar Lagrangian.  It will be convenient to denote
by $L_a$ the intersection of a Lagrangian $L$ with the hyperplane
$\{y_2 = a\}$.  Let $i_a: \rr^3 \to \{ y_2 = a \} \subset \rr^4$
denote the inclusion.  Then a link $S \subset \rr^3$ is a
\textbf{(generic) negative Lagrangian slice} if there exists a
flat-at-infinity planar Lagrangian $L \subset \rr^{4}$ and an $a<0$
such that $L$ is transverse to the hyperplane $\{y_2 = a\}$ and $L_a =
i_a(S)$.

Let $\slices$ denote the set of all negative Lagrangian slices
together with the empty set.  The projection of the non-empty elements
of $\slices$ to the $x_1y_1$-plane will be the unions of immersed
curves where each component bounds zero signed area and has winding
number $0$.  It will be convenient to represent a slice by a
``diagram'' in the $x_1y_1$-plane that records the over/under strand
with respect to the $x_2$-coordinate at double points of the
projection.  Given a diagram $D$ in the $x_1y_1$-plane, $\t D\subset
\rr^3$ will denote a link that projects to $D$ bijectively at all
non-crossing points of $D$; in other words, $\t D$ is an $x_2$-lift of
$D$.

We consider the set of slices up to an equivalence relation: for $S_1,
S_2 \in \slices$, say that $S_1 \sim S_2$ if there exists a compactly
supported 
 area-preserving diffeomorphism $\varphi$ of the
$x_1y_1$-plane so that $(\varphi \times \id) (S_1) = S_2$.  Let
$$\eslices = \slices / \sim$$
denote the set of equivalence classes of slices.  By our equivalence
relation, we can work with relatively combinatorial representations of
slices, as diagrams need only be defined up to area-preserving
diffeomorphism: Figure~\ref{fig:possible-slices} shows some diagrams
of negative slices, while Figure~\ref{fig:impossible-slices} shows
some diagrams of closely related unknotted curves that cannot be
realized as negative slices; see Sections~\ref{sec:constr} and
\ref{ssec:slice-obstr} for proofs.

\begin{figure}  
  \centerline{\includegraphics{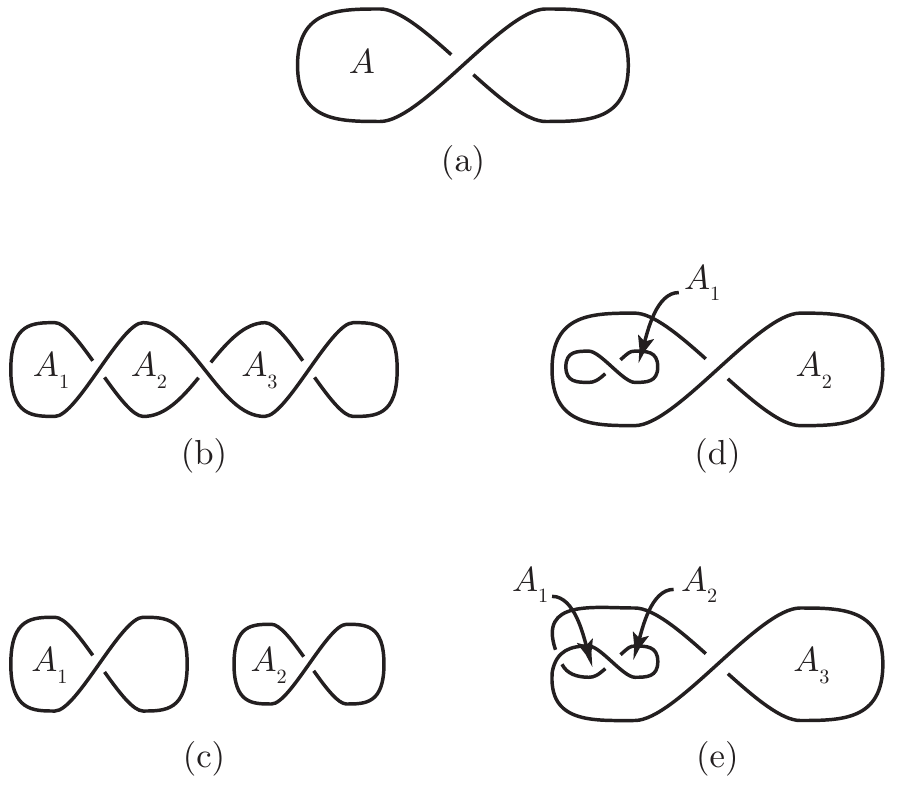}}
  \caption{ These diagrams in the $x_1y_1$-plane represent
    negative slices of flat-at-infinity planar Lagrangians for some
    $x_2$-coordinates respecting the crossings.  The diagram in (a)
    will be denoted by $8^+(A)$; the diagram in (b) will be denoted by
    $C^{+ - +}(A_1, A_2, A_3)$; the two component link diagram in (c)
    will be denoted by $8^+(A_1) + 8^+(A_2)$; the link diagram in (d)
    will be denoted by $8^-(A_1) \odot 8^+(A_2)$; and the diagram in
    (e) will be denoted by $8^-(A_1, A_2) \times 8^+(A_3)$. The
    positive numbers $A_i$ represent the areas of bounded regions of
    the diagrams.}
  \label{fig:possible-slices}
\end{figure}

\begin{figure}  
  \centerline{\includegraphics{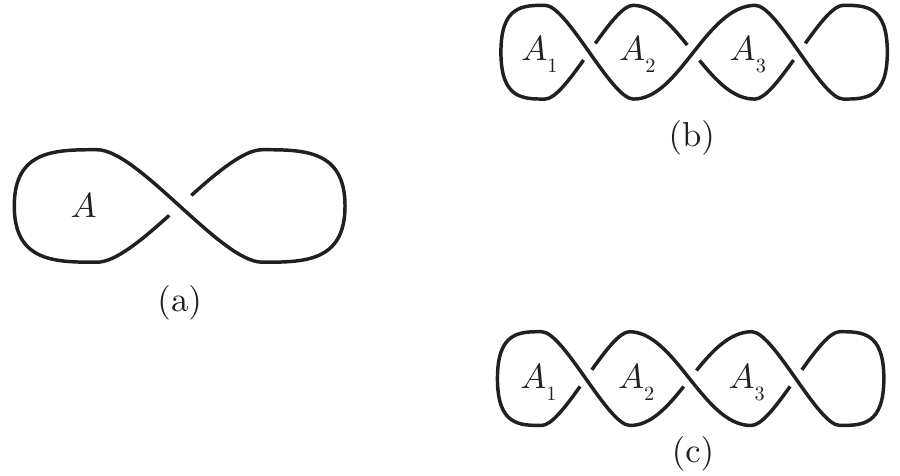}}
  \caption{There is no negative slice of a flat-at-infinity planar
    Lagrangian with diagram (a) $8^-(A)$, (b) $C^{-+-}(A_1, A_2,
    A_3)$, or (c) $C^{---}(A_1, A_2, A_3)$. }
  \label{fig:impossible-slices}
\end{figure}

Embedded Lagrange cobordisms define relations $\srel$ and $\rel$ on
$\slices$:

\begin{defn}
  Given $S, S' \in \slices$, we say that $S \srel S'$ if there exists
  a flat-at-infinity planar Lagrangian $L$ and $a < b < 0$ so that
  $i_a(S) = L_a$ and $i_b(S') = L_b$.  The relation $\rel$ is defined
  analogously with $a \leq b < 0$.
\end{defn}

In Section~\ref{sec:rel-op}, we will show:

\begin{prop} \label{prop:rel-well-def} The relations $\srel$ and
  $\rel$ are well-defined on $\eslices$.
\end{prop}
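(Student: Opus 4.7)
To establish the proposition I would show the following: given $S_1 \sim S_1'$ and $S_2 \sim S_2'$, witnessed by compactly supported area-preserving diffeomorphisms $\varphi_1, \varphi_2$ of the $x_1y_1$-plane, and given a flat-at-infinity planar Lagrangian $L$ with $L_a = i_a(S_1)$ and $L_b = i_b(S_2)$ for some $a < b < 0$, one can produce a flat-at-infinity planar Lagrangian $L'$ with $L'_a = i_a(S_1')$ and $L'_b = i_b(S_2')$. The plan is to construct a compactly supported symplectomorphism $\Phi$ of $\rr^4$ that preserves each hyperplane $\{y_2 = c\}$, restricts to $\varphi_1 \times \id$ on $\{y_2 = a\}$, and restricts to $\varphi_2 \times \id$ on $\{y_2 = b\}$. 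Setting $L' := \Phi(L)$ then does the job, since $\Phi$ being a compactly supported diffeomorphism preserves flat-at-infinity and planarity, while the slice identities follow directly from the action of $\Phi$ on the two hyperplanes.

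The construction of $\Phi$ is via a Hamiltonian flow lifted from the $x_1y_1$-plane. First, I would realize each $\varphi_j$ as the time-$1$ map of a compactly supported time-dependent Hamiltonian $H_j : \rr^2 \times [0,1] \to \rr$. Next, choose cutoff functions $\rho_1, \rho_2 : \rr \to [0,1]$ with disjoint supports, each identically $1$ on a neighborhood of $a$ and $b$ respectively. Define
\[ K(x_1, y_1, x_2, y_2, t) = \rho_1(y_2)\, H_1(x_1, y_1, t) + \rho_2(y_2)\, H_2(x_1, y_1, t), \]
a compactly supported time-dependent Hamiltonian on $\rr^4$, and let $\Phi$ be the time-$1$ flow of $X_K$.

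The verification is then a short computation. Since $K$ is independent of $x_2$, the vector field $X_K$ has no $\partial_{y_2}$ component, so $\Phi$ preserves every $y_2$-level. At $y_2 = a$ both $\rho_1$ and $\rho_2$ are locally constant, so $\partial_{y_2} K$ vanishes there; hence $X_K$ reduces to $X_{H_1}$ in the $x_1y_1$-directions with no $\partial_{x_2}$ drift, and the time-$1$ flow on $\{y_2 = a\}$ is exactly $\varphi_1 \times \id$. The analogous statement holds at $y_2 = b$. Compact support of $K$ ensures $\Phi$ is the identity outside a compact set, so $\Phi(L)$ is flat-at-infinity and planar. The proof for $\rel$ is the same; the degenerate case $a = b$ forces $S_1 = S_2$ and hence $[S_1'] = [S_2']$, with reflexivity $[T] \rel [T]$ trivially witnessed by any representative of $T$ in $\slices$.

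The main technical ingredient is the Hamiltonian realization of the $\varphi_j$. This relies on the identification of $\mathrm{Diff}_c^{\mathrm{area}}(\rr^2)$ with the group of compactly supported Hamiltonian diffeomorphisms, which combines connectedness of $\mathrm{Diff}_c^{\mathrm{area}}(\rr^2)$ (obtained by combining Smale's connectedness of $\mathrm{Diff}_c^+(\rr^2)$ with a parametric Moser correction to make an arbitrary isotopy area-preserving) with the vanishing of $H^1_c(\rr^2)$ (which lets the generating vector field of any such isotopy be integrated to a compactly supported Hamiltonian, after normalizing by the constant value outside a compact set).
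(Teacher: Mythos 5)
Your proof takes exactly the approach the paper takes: realize the two area-preserving diffeomorphisms as time-$1$ flows of compactly supported Hamiltonians $H_1, H_2$ on the $x_1y_1$-plane, cut off by disjointly supported bump functions $\rho_1(y_2), \rho_2(y_2)$ localized near $a$ and $b$, and flow $L$ by the resulting Hamiltonian $K = \rho_1(y_2)H_1 + \rho_2(y_2)H_2$, observing that $\partial_{x_2}K = 0$ forces the flow to preserve each $\{y_2 = c\}$ and that $\partial_{y_2}K$ vanishes near $y_2 = a, b$ so the restricted flow is $\varphi_j \times \id$ there. One small inaccuracy worth fixing: since $K$ is independent of $x_2$, it is \emph{not} compactly supported on $\rr^4$ (only compactly supported in $(x_1,y_1,y_2)$), so $\Phi$ is not literally the identity outside a compact set; the right observation is that the locus where $X_K \neq 0$ meets $L$ in a compact set --- outside $K \cup \{|x_2| \leq R\}$ the Lagrangian $L$ sits on $\{y_1 = y_2 = 0\}$ where $K \equiv 0$ because $\rho_1, \rho_2$ vanish near $y_2=0$ --- so $\Phi_1(L)$ agrees with $L_0$ outside a compact set and remains flat-at-infinity and planar.
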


\begin{exam} \label{exam:first-rels}  
  Using the notation of Figure~\ref{fig:possible-slices}, we have the
  following relations:
  \begin{enumerate}
  \item For any ${\mathbf S} \in \eslices$, $[\emptyset] \rel {\mathbf S}$.
  \item  There exist areas $A_1, A_2, B_1, B_2, B_3, C$ so that
    $$ [\t 8^+(A_1) +  \t 8^+(A_2)] \srel [\t C^{+-+}(B_1,B_2,B_3)] \srel [\t 8^+(C)].$$
  \item There exists areas  $A_1, A_2, A_3, B_1, B_2, C$ so that
    $$ [\t 8^-(A_1, A_2) \times \t 8^+(A_3)] \srel [\t 8^-(B_1) \odot \t 8^+(B_2)]
    \srel [\t 8^+(C)].$$
  \end{enumerate}
  See Section~\ref{sec:constr} for a verification of these examples.
\end{exam}

Unlike in Arnold's original construction, the cobordisms defining
$\rel$ are directed, and hence do not give rise to an equivalence
relation. In fact, on the subset $\cslices \subset \eslices$ of
connected slices, we can show that the cobordisms defining $\rel$ give
rise to a partial order:

\begin{thm} \label{thm:rel-props} Using the notation of
  Figure~\ref{fig:possible-slices},
  \begin{enumerate}
  \item The relation $\rel$ is not symmetric: in particular, for all
    $0 < A < B$, $$[\t 8^+(A)] \rel [\t 8^+(B)], \text{ while } [\t 8^+(B)]
    \notrel [\t8^+(A)].$$
  \item Not all slices are related: in particular, for any $A, B> 0$,
    $$[\t 8^+(A)] \notrel [\t C^{(+,-,+)}(A,B,B)],  \text{ and }
    [\t C^{(+,-,+)}(A,B,B)] \notrel [\t 8^+(A)].$$
  \item The relation $\rel$ gives $\cslices$ the structure of a
    partially ordered set.
  \end{enumerate}
\end{thm}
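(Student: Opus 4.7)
The three parts of the theorem require distinct techniques. Parts (1) and (2) split into explicit constructions of cobordisms (developed in Section~\ref{sec:constr}) and obstructions (Section~\ref{ssec:slice-obstr}), while part (3) combines these with a rigidity argument.

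For part (1), the positive direction $[\t 8^+(A)] \rel [\t 8^+(B)]$ with $A < B$ is handled by exhibiting an explicit flat-at-infinity planar Lagrangian whose slices at heights $a < b$ are $\t 8^+(A)$ and $\t 8^+(B)$ respectively; the plan is to use an ``inflation'' construction of the sort developed in Section~\ref{sec:constr}. For the nonexistence in the reverse direction, the plan is to combine the Lagrangian condition with Stokes' theorem applied to the Liouville form $y_1\,dx_1 + y_2\,dx_2$: applying Stokes to a 2-chain in $L$ bounded by corresponding lobes of the two figure-8 slices and by two arcs connecting the lifts of the crossing would yield an identity forcing the lobe area to depend monotonically on $y_2$. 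This monotonicity rules out $[\t 8^+(B)] \rel [\t 8^+(A)]$ whenever $A < B$.

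For part (2), I would argue that a cobordism between $\t 8^+(A)$ and $\t C^{+-+}(A,B,B)$ must, as $y_2$ varies, transition between slice diagrams with different crossing numbers. The obstructions from Section~\ref{ssec:slice-obstr} restrict the possible local bifurcations of slice diagrams, and the sign pattern recorded by the $\pm$ decorations at crossings, together with the lobe-area monotonicity of part (1), should rule out every allowed transition in both directions.

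For part (3), reflexivity follows from the definition of $\rel$ by taking $a = b$, and transitivity follows from stacking cobordisms: given realizations of $S_1 \rel S_2$ and $S_2 \rel S_3$, I would use the $\sim$-equivalence to align the middle slices and then glue the two planar Lagrangians across the common slice, applying a smoothing to maintain the flat-at-infinity condition. The main obstacle is antisymmetry. Given $S_1 \rel S_2$ and $S_2 \rel S_1$, the plan is to stack the cobordisms and use the monotone invariants from part~(1) (lobe areas, plus any monotone data distinguishing combinatorial types) to force all such invariants of $S_1$ and $S_2$ to coincide, and then deploy a rigidity argument, valid on the connected subset $\cslices$, upgrading equality of invariants to an equivalence of slices under an area-preserving diffeomorphism of the $x_1y_1$-plane. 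This final rigidity step is the crux: disconnected slices admit connect-sum ambiguities that would break antisymmetry, which is why the statement is restricted to $\cslices$.
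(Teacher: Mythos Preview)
Your proposal diverges substantially from the paper's approach, and parts (2) and (3) contain genuine gaps.

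The paper's central engine is the machinery of \emph{slice capacities} $c_\pm^{L,a}, C_\pm^{L,a}$ defined via generating families (Section~\ref{subsec:capacities}); these satisfy Invariance, strict Monotonicity in the $y_2$-level, and Non-Vanishing. You never invoke them.

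For part (1), the paper's obstruction is immediate: Theorem~\ref{thm:fig8} gives $c_-^{L,a}(u) = -A$ for any slice with diagram $8^+(A)$, and strict Monotonicity then forbids $[\t 8^+(B)] \rel [\t 8^+(A)]$ when $A<B$. Your Stokes/lobe-area argument is a different route; it may be salvageable, but the lobes are not closed curves (their endpoints are distinct $x_2$-lifts of the crossing), and you have not explained how to control the boundary contributions from the arcs tracking the crossing as $y_2$ varies.

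For part (2), the paper computes (Lemma~\ref{lem:capcat}) that $c_-^{L,a}(u) = -A$ for any slice with diagram $C^{+-+}(A,B,B)$, identical to the value for $8^+(A)$; strict Monotonicity then rules out a relation in either direction. Your plan to analyze ``local bifurcations of slice diagrams'' has no counterpart in the paper: Section~\ref{ssec:slice-obstr} obstructs certain diagrams from being slices at all, but says nothing about how slice diagrams evolve as $y_2$ varies. Without such an analysis your argument does not go through.

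For part (3), your transitivity sketch is close to the paper's (gluing along a common slice, Proposition~\ref{prop:gluing}). But your antisymmetry plan---match monotone invariants, then invoke an unnamed ``rigidity'' theorem classifying connected slices by those invariants---has no foundation; no such classification is available. The paper's argument is quite different: it iterates the gluing $n+1$ times (where $n$ is the number of crossings of $S$) to produce a single flat-at-infinity planar Lagrangian with $n+1$ slices all equivalent to $S$. The capacities at these levels are strictly monotone whenever nonzero, yet can only take the $\leq n$ nonzero critical values of the difference function; a pigeonhole forces all four capacities to vanish at the deepest slice, contradicting Non-Vanishing. Finally, connectivity is used not to avoid ``connect-sum ambiguities'' in the sense you describe, but to guarantee (via the Jordan curve theorem, Proposition~\ref{prop:gluing}) that each glued Lagrangian remains \emph{planar}, which is what the generating-family capacities require.
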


The second part of the theorem implies that $\cslices$ is not totally
ordered by $\rel$.  We believe that the relation $\rel$ gives the
entire set $\eslices$ the structure of a partially ordered set.
However, the obstructions used in the proof of antisymmetry work only
with flat-at-infinity planar Lagrangians, and the gluing operations
used in the proof of transitivity and antisymmetry may, in general,
take us outside that set.

Further structure on $\eslices$ is suggested by the second part of
Example~\ref{exam:first-rels}: the curve $\t 8^+(A_1) + \t 8^+(A_2)$
can be thought of as a sum of slices.  In fact, $\eslices$ becomes a
commutative monoid when addition is defined by disjoint union:

\begin{defn} For $\mathbf {S}, \mathbf S' \in \eslices$, let $S \in
  \mathbf S$, $S' \in \mathbf S'$, $S \subset \{ x_1 < 0 \}$, and $S'
  \subset \{ x_1 > 0 \}$.  Define:
  $$\mathbf S + \mathbf S' = [S \cup S'].$$
\end{defn}
 That this addition is well-defined on $\eslices$ will be proven below
in Section~\ref{sec:rel-op}.  The additive structure has an
interesting interaction with the relations $\rel$ and $\srel$:

\begin{thm} \label{thm:add-rel}
  \begin{enumerate}
  \item The relation $\rel$ is not compatible with $+$: in particular,
    $$[\emptyset] \rel [\t 8^+(A)] \text{ and } [\t 8^+(A)] \rel [\t 8^+(A)], \text{ but } 
    [\t 8^+(A)] \notrel [\t 8^+(A)] +[\t 8^+(A)].$$
  \item The strict relation $\srel$ is compatible with $+$:  for all
  $\mathbf S, \mathbf S', \mathbf T, \mathbf T' \in \eslices$, if $\mathbf S
    \srel \mathbf S'$ and $\mathbf T \srel \mathbf T'$ then $\b S + 
    \b T \srel \b S' + \b T'$.
\end{enumerate}
\end{thm}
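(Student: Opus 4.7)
For Part~(1), the first two relations in the hypothesis are immediate: $[\emptyset] \rel [\t 8^+(A)]$ is Example~\ref{exam:first-rels}(1), and $[\t 8^+(A)] \rel [\t 8^+(A)]$ holds by taking any flat-at-infinity planar Lagrangian $L$ realizing $\t 8^+(A)$ at some $a < 0$ and using $b = a$ in the definition of $\rel$. The substantive claim is $[\t 8^+(A)] \notrel [\t 8^+(A)] + [\t 8^+(A)]$, which we prove by contradiction. Suppose $L$ is a flat-at-infinity planar Lagrangian with $L_a = i_a(\t 8^+(A))$ and $L_b = i_b(\t 8^+(A) + \t 8^+(A))$ for some $a \leq b < 0$. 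Because $L \cong \rr^2$, the sublevel sets $\{y_2|_L \leq s\}$ are compact planar subsurfaces whose number of boundary components can change only at Morse critical points of $y_2|_L$; gaining a boundary circle between $s = a$ and $s = b$ therefore forces at least one index-$0$ (birth) critical point in the slab $\{a \leq y_2 \leq b\}$. Just above such a birth the newborn slice component is an arbitrarily small embedded loop in $\rr^3$ whose $x_1y_1$-projection is a near-trivial diagram bounding essentially no area, and one shows using the slice obstructions of Section~\ref{ssec:slice-obstr} that this loop cannot subsequently deform into the figure-$8$ diagram $8^+(A)$ without passing through a slice configuration that is itself forbidden in a flat-at-infinity planar Lagrangian.

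For Part~(2), fix representatives $S \in \b S$, $S' \in \b S'$, $T \in \b T$, $T' \in \b T'$ and corresponding flat-at-infinity planar Lagrangians $L_1$ (with levels $a_1 < b_1 < 0$ connecting $S$ to $S'$) and $L_2$ (with levels $a_2 < b_2 < 0$ connecting $T$ to $T'$). By the well-definedness of $\srel$ on $\eslices$ (Proposition~\ref{prop:rel-well-def}), we may replace $L_1, L_2$ by equivalent realizations so that $a_1 = a_2 =: a$ and $b_1 = b_2 =: b$. Next, by choosing suitable representatives within each slice equivalence class (which allows composition with area-preserving diffeomorphisms of the $x_1y_1$-plane) together with $x_1$-translations applied to each Lagrangian, we arrange that $S, S' \subset \{x_1 < 0\}$ and $T, T' \subset \{x_1 > 0\}$; equivalently, the bubble of $L_1$ lies in $\{x_1 < 0\}$ and that of $L_2$ lies in $\{x_1 > 0\}$. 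Since both $L_i$ agree with the zero-section $L_0$ outside their (now disjoint) bubbles, the union $M := L_0 \cup (L_1 \setminus L_0) \cup (L_2 \setminus L_0)$ is a flat-at-infinity planar Lagrangian, and direct inspection gives $M_a = i_a(S \sqcup T)$ and $M_b = i_b(S' \sqcup T')$. These slices represent $\b S + \b T$ and $\b S' + \b T'$, respectively, witnessing $\b S + \b T \srel \b S' + \b T'$.

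The main obstacle in Part~(1) is extracting the concrete contradiction from the forced Morse birth: the component-counting step is quick, but ruling out the intermediate configurations by which a tiny newborn loop could evolve into the diagram $8^+(A)$ requires a careful invocation of the explicit obstructions of Section~\ref{ssec:slice-obstr}, since a priori there are many possible critical-point sequences between the birth and level $b$. In Part~(2), the main technical point is guaranteeing that the height-alignment step can be carried out within the flat-at-infinity planar category — i.e., verifying via Proposition~\ref{prop:rel-well-def} that any $(a,b)$ with $a < b < 0$ can be realized for each relation — after which the separation in $x_1$ and the gluing of disjoint bubbles are geometrically straightforward.
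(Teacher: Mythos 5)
Your Part~(2) argument is in essence the paper's: align the levels and form a ``connect sum'' by placing the two Lagrangians in disjoint $x_1$ half-spaces (the paper calls this $L \# M$). The one slip is the citation: alignment of levels is not a consequence of Proposition~\ref{prop:rel-well-def} (which only gives well-definedness under $\sim$, i.e., area-preserving diffeomorphisms of the $x_1y_1$-plane, which do not move $y_2$-levels); the paper uses the level-shifting Lemma~\ref{lem:move-levels} for this. Otherwise Part~(2) is sound.

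Part~(1) has a genuine gap, and the approach itself does not work. First, the component count of the sublevel surface $\{y_2|_L \le s\}$ can increase through an index-$1$ saddle (a single boundary circle pinching into two), not only through an index-$0$ birth, so the ``forced birth'' is not forced. Second, and more importantly, the claim that a newborn small loop ``cannot subsequently deform into the figure-$8$ diagram $8^+(A)$ without passing through a forbidden slice'' is false: $[\emptyset] \rel [\t 8^+(A)]$ by Example~\ref{exam:first-rels}(1), so a tiny loop born from nothing does evolve into an $8^+(A)$, and the obstructions of Section~\ref{ssec:slice-obstr} do not forbid this. Indeed, the constructions in Section~\ref{sec:constr} explicitly realize such evolutions. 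The reason the relation $[\t 8^+(A)] \rel [\t 8^+(A)]+[\t 8^+(A)]$ fails is not a local obstruction visible near a birth; it is a \emph{global, quantitative} interplay between the two ends. The paper uses Lemma~\ref{lem:8+8} to compute $c_-^{L,b}(u) = -A$ at the two-component end, uses Theorem~\ref{thm:fig8} to compute $c_-^{L,a}(u) = -A$ at the one-component end, and then applies the \emph{strict} form of Monotonicity (which holds whenever a capacity in the inequality is nonzero) to get $-A > -A$, a contradiction. Your argument never invokes the specific area $A$ at all, whereas the statement is only true because the areas on the two ends coincide; any correct proof must use that coincidence, and the capacity inequality is exactly the mechanism that does.
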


The rest of the paper is structured as follows: in
Section~\ref{sec:rel-op}, we verify that $\rel$, $\srel$, and $+$ are
well-defined on $\eslices$; we also prove part (2) of
Theorem~\ref{thm:add-rel}.  Section~\ref{sec:constr} describes
constructions of the examples in Figure~\ref{fig:possible-slices}.
Section~\ref{sec:obstr} discusses the machinery of slice capacities, a
concept introduced in \cite{josh-lisa:cap}, in order to provide
obstructions to certain curves being realized as elements of $\slices$
and to show that certain relations between elements of $\eslices$ do
not exist.  These capacities, influenced greatly by ideas of Viterbo
\cite{viterbo:generating}, are defined through the theory of
generating families, and it is for this reason that we restrict
ourselves to flat-at-infinity planar Lagrangians.  Calculations of
capacities prove parts (1) and (2) of Theorem~\ref{thm:rel-props} and
part (1) of Theorem~\ref{thm:add-rel}.  Finally, Section~\ref{sec:po}
contains the proof of part (3) of Theorem~\ref{thm:rel-props}, namely
that $\rel$ is a partial order on the set of connected slices
$\cslices$.

\subsection*{Acknowledgments}

We thank V.I. Arnold for his many inspirational ideas and beautiful
results over the years.


\section{Relations and Operations}
\label{sec:rel-op}

In this section, we will verify that the relation $\srel$ and the
operation $+$ are well-defined with respect to the equivalence
relation $\sim$ on $\slices$.

\subsection{The Definition of $\srel$ and $\rel$}
\label{ssec:rel-def}

We first show that the relation $\srel$ (and $\rel$) is well-defined
with respect to the equivalence on the set of slices.

\begin{proof}[Proof of Proposition~\ref{prop:rel-well-def}]
  Suppose that $S_0 \srel T_0$ via the flat-at-infinity planar
  Lagrangian $L$ at levels $a < b < 0$.  Suppose further that $S_0
  \sim S_1$ (resp. $T_0 \sim T_1$) via 
  a  diffeomorphism generated by the Hamiltonian
  function $H_t(x_1, y_1)$ (resp. $G_t(x_1,y_1)$); here we are
  using the fact that the  area-preserving diffeomorphisms
  guaranteed by $\sim$ are necessarily Hamiltonian.    Choose interval
  neighborhoods $U_a$ and $U_b$ of $a, b \in \rr$ so that $U_a \cap
  U_b = \emptyset$ and $0 \notin U_b$; choose smaller neighborhoods
  $V_a \subset U_a$ and $V_b \subset V_b$ of $a$ and $b$.  Then let
  $\sigma: \rr \to \rr$ be a smooth function that is equal to $1$ on
  $V_a$ and to $0$ outside $U_a$.  Similarly, let $\tau: \rr \to \rr$
  be equal to $1$ on $V_b$ and to $0$ outside $U_b$.  It is then
  straightforward to verify that
  \begin{equation}
    \mathbf H_t (x_1, x_2, y_1, y_2) = \sigma(y_2) H_t(x_1,y_1) +
    \tau(y_2) G_t(x_1,y_1)
  \end{equation}
  generates a Hamiltonian isotopy of $\rr^4$ taking $L$ to a
  flat-at-infinity planar Lagrangian joining $S_1$ to $T_1$.
\end{proof}

\subsection{The Definition of $+$}
\label{ssec:plus-def}

To show that the sum $\mathbf{S} + \mathbf{S}'$ is well-defined, we
will prove the following proposition:

\begin{prop} \label{prop:add-well-def}
  \begin{enumerate}
  \item The sum $\mathbf{S} + \mathbf{S}'$ does not depend upon the
    choice of representatives of $\mathbf{S}$ in $\{x_1 < 0\}$ or of
    $\mathbf{S}'$ in $\{x_1 > 0\}$.
  \item The sum of two elements of $\eslices$ is again an element of
    $\eslices$.
  \end{enumerate}
\end{prop}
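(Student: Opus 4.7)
The plan is to handle the two parts separately. Part (1) adapts the cutoff-and-glue Hamiltonian construction from the proof of Proposition~\ref{prop:rel-well-def}, while Part (2) combines a $y_2$-alignment Hamiltonian isotopy with a construction using generating families.

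For Part (1), suppose $S_0 \sim S_1$ in $\{x_1 < 0\}$ via a Hamiltonian $H_t(x_1, y_1)$, and $S_0' \sim S_1'$ in $\{x_1 > 0\}$ via a Hamiltonian $H'_t(x_1, y_1)$. The first step is to replace these Hamiltonians by versions with compact supports contained in $\{x_1 < 0\}$ and $\{x_1 > 0\}$ respectively. Since each half-plane is simply connected and symplectomorphic to $\rr^2$, this localization is possible: two compact subsets of a half-plane related by a compactly supported Hamiltonian of $\rr^2$ are also related by one supported in the half-plane, because the Hamiltonian-isotopy orbit in $\rr^2$ is determined by the combinatorial and complementary-area data that is already recorded inside the half-plane. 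Once the supports are disjoint, the sum $\mathbf{H}_t := H_t + H'_t$ is a well-defined compactly supported Hamiltonian on $\rr^2$ whose time-$1$ flow takes $S_0 \cup S_0'$ to $S_1 \cup S_1'$, since the two summands act independently on their disjoint supports.

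For Part (2), let $L_1, L_2$ be flat-at-infinity planar Lagrangians realizing $S$ at $y_2 = b_1$ and $S'$ at $y_2 = b_2$. First I would align both slices at a common level $y_2 = a < 0$: a compactly supported Hamiltonian isotopy generated by $H = \chi(x_1, x_2, y_1, y_2) \cdot x_2$, for a bump function $\chi$ equal to $1$ on a large neighborhood of the non-trivial part of $L_2$ (including its $y_2$-translate), produces pure $y_2$-translation in its plateau region while preserving the flat-at-infinity property. Next, for each of $L_1, L_2$ I would pass to a Hamiltonian-isotopic representative whose quadratic-at-infinity generating family $F_i = Q_i + f_i$ has the compactly supported piece $f_i$ localized over the appropriate half of the base $(x_1, x_2)$-plane. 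The combined generating family $F(x; \xi_1, \xi_2) := Q_1(\xi_1) + Q_2(\xi_2) + f_1(x, \xi_1) + f_2(x, \xi_2)$ is then quadratic-at-infinity, and the Lagrangian it generates coincides with $L_1$ over $\{x_1 < 0\}$, with $L_2$ over $\{x_1 > 0\}$, and with $L_0$ over a strip near $\{x_1 = 0\}$; its slice at $y_2 = a$ is precisely $i_a(S \cup S')$.

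The step I expect to be the main obstacle in both parts is support-localization: showing in Part (1) that a global Hamiltonian equivalence between compact subsets of a half-plane can be realized by a Hamiltonian supported in the half-plane, and showing in Part (2) that a generating family for a Lagrangian realizing a slice in $\{x_1 < 0\}$ can be chosen with its non-quadratic part compactly supported over the same half of the base. Both rest on the flexibility of two-dimensional symplectic geometry, and care will be needed to ensure the localizations do not alter the slice data at $y_2 = a$ that we are trying to preserve.
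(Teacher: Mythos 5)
The key step in your Part (1) --- that a compactly supported Hamiltonian isotopy of $\rr^2$ carrying one compact set in the half-plane to another can be replaced by one supported \emph{in} the half-plane --- is exactly where the paper does the real work, and the reason you offer (that the orbit is ``determined by the combinatorial and complementary-area data'') is not an argument; it reasserts the conclusion. The fix is concrete and uses precisely the observation you already make, that $\{x_1 < 0\}$ is symplectomorphic to $\rr^2$: choose $\epsilon > 0$ with $S, \bar S \subset \{x_1 < -\epsilon\}$ and an area-preserving diffeomorphism $\tau$ with $\tau(\{x_1 < 0\}) = \rr^2$ that is the identity on $\{x_1 \leq -\epsilon\}$, and \emph{conjugate} the given isotopy $\varphi_t$ by $\tau$. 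Then $\tau^{-1}\circ \varphi_t \circ \tau$ is compactly supported in $\{x_1 < 0\}$ (the $\tau$-preimage of a compact set is compact in the half-plane), extends by the identity to all of $\rr^2$, and still carries $S$ to $\bar S$ since $\tau$ fixes both. That conjugation is the paper's proof. Once stated, the ``main obstacle'' you flag at the end dissolves.

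Your Part (2) is a genuinely different route from the paper's. The paper defines a connect sum $L \# L'$ directly on the Lagrangians (Definition~\ref{defn:connect-sum}): $x_1$-translate $L$ and $L'$ so their compact cores sit in opposite half-spaces, then cut and paste along $\{x_1 = 0\}$, where both agree with the zero section; after the level-alignment of Lemma~\ref{lem:stretch}, the slice $(L\#L')_a$ is the desired representative. Your generating-family splicing $F = Q_1(\xi_1) + Q_2(\xi_2) + f_1 + f_2$ accomplishes the same thing one level up, with the minor advantage that smoothness and planarity of the output Lagrangian are automatic from the fiber-critical-set description and with the cost of invoking machinery the paper defers to Section~\ref{sec:obstr}. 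The localization you worry about in Part (2) is benign: it is just an $x_1$-translation of each $L_i$, which does not touch $y_2$ and so cannot perturb the slice level. Note, though, that both routes still need Part (1) at the end to identify the translated slice $\tau(S)\cup\tau'(S')$ with the class $\mathbf S + \mathbf S'$ in $\eslices$; the paper says this explicitly and your sketch should too.
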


\begin{proof}[Proof of Part (1)]
  Suppose that $S$ and $\bar{S}$ are both representatives of
  $\mathbf{S}$ in $\{ x_1< 0 \}$; it suffices to show that $S \cup S'
  \sim \bar{S} \cup S'$.  By hypothesis, there exists a compactly
  supported area-preserving isotopy $\varphi_t$ of the $x_1y_1$-plane so
  that $(\varphi_1 \times \id) (S) = \bar{S}$.  We construct a compactly
  supported area-preserving isotopy $\t \varphi_t$ of $\{ x_1 < 0 \}$
  so that $(\t\varphi_1 \times \id) (S) = \bar{S}$ as follows.  Choose
  $\epsilon > 0$ so that $S, \bar{S} \subset \{ x_1 < -\epsilon\}$.
  There exists an area-preserving diffeomorphism $\tau$ so that $\tau
  ( \{ x_1 < 0 \} ) = \rr^2$ and $\tau$ is the identity on $\{ x_1
  \leq -\epsilon \}$.  Then $\tau^{-1} \circ \varphi_t \circ \tau$ is a
  compactly supported area-preserving isotopy of $\{ x_1 < 0\}$ that
  extends by the identity to a compactly supported area-preserving isotopy $\t
  \varphi_t$ of $\rr^2$ with the property that $(\t\varphi_1\times \id) (S \cup S') = (\bar{S}
  \cup S')$.
\end{proof}

The proof of the second part of the proposition --- that the sum of
two elements of $\eslices$ is again an element of $\eslices$ ---
relies upon two constructions.  The first is that there is a
Hamiltonian diffeomorphism that can shift the level of slices:
   
\begin{lem} \label{lem:stretch} For any $a < b < 0$, there is a
  Hamiltonian diffeomorphism $\psi$ so that $\psi$ is the identity on
  $\{ y_2 \geq b \}$, and on $\{ y_2 \leq a\}$, $\psi$ is a
  translation by $m$ in the $y_2$-direction for any $m < b-a$.
\end{lem}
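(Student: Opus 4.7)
My plan is to construct $\psi$ explicitly as a symplectomorphism acting only on the $(x_2, y_2)$-factor of $\rr^4$, and then to upgrade it to a Hamiltonian diffeomorphism using simple connectivity of $\rr^4$. The construction hinges on a smooth shift profile $f : \rr \to \rr$ satisfying $f(y_2) = m$ for $y_2 \le a$, $f(y_2) = 0$ for $y_2 \ge b$, and $f'(y_2) > -1$ everywhere. Such an $f$ exists precisely because $m < b - a$ makes the average slope of any such interpolation over $[a,b]$, namely $-m/(b-a)$, strictly greater than $-1$; a standard bump-function argument then yields a smooth realization respecting the pointwise bound.

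Given $f$, I would set
$$\psi(x_1, x_2, y_1, y_2) = \left(x_1,\; \frac{x_2}{1 + f'(y_2)},\; y_1,\; y_2 + f(y_2)\right).$$
The rescaling of the $x_2$-coordinate is forced by symplecticity: once one demands a pure $y_2$-translation by $f(y_2)$ in the fourth coordinate, $dx_2 \wedge dy_2$ gets multiplied by $1 + f'(y_2)$, and dividing $x_2$ by the same factor compensates. A one-line pullback calculation then confirms $\psi^*\omega = \omega$. Positivity of $1 + f'(y_2)$ makes $y_2 \mapsto y_2 + f(y_2)$ a diffeomorphism of $\rr$, so $\psi$ is a diffeomorphism of $\rr^4$; and the required boundary behavior is immediate because on both $\{y_2 \le a\}$ and $\{y_2 \ge b\}$ the function $f$ is constant, so $f' \equiv 0$ and the map degenerates to $\id$ or to pure translation by $m$.

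To promote $\psi$ from symplectic to Hamiltonian, I would interpolate: let $\psi_s$ be the analogous map built from $sf$ in place of $f$, for $s \in [0,1]$. Since $sf'(y_2) > -s \ge -1$, the same computation shows each $\psi_s$ is a symplectomorphism, producing a smooth path from $\id$ to $\psi$. The generating vector field $X_s = (\partial_s \psi_s) \circ \psi_s^{-1}$ is then symplectic, so the one-form $\omega(X_s, \cdot)$ is closed; simple connectivity of $\rr^4$ makes it exact, and any primitive $H_s$ serves as a time-dependent Hamiltonian whose time-$1$ flow is $\psi$.

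I expect the main obstacle to be finding the explicit formula in the middle step --- specifically, recognizing that a pure translation in $y_2$ \emph{cannot} be symplectic and that the right compensation is to divide $x_2$ by $1 + f'(y_2)$. The rest is routine once this compensating factor is in place. The hypothesis $m < b - a$ is not incidental: it is precisely the threshold below which a smooth $f$ with $f' > -1$ exists, and correspondingly the threshold below which such a $\psi$ can exist at all.
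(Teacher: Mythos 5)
Your proof is correct, but it takes a genuinely different route from the paper's. The paper's argument is a one-liner: take the \emph{autonomous} Hamiltonian $H(x_1,x_2,y_1,y_2) = \sigma(y_2)\,x_2$ with $\sigma$ a smooth step function equal to $0$ for $y_2 \geq b$ and to $m$ for $y_2 \leq \max\{a,a+m\}$, and observe that its flow $\dot y_2 = \sigma(y_2)$, $\dot x_2 = -\sigma'(y_2)x_2$ is integrable and already produces the desired $\psi$ as a time-$1$ map, with no need to first exhibit the symplectomorphism and then certify it as Hamiltonian. You instead write down the symplectomorphism explicitly via the compensating rescaling $x_2 \mapsto x_2/(1+f'(y_2))$ and then upgrade it to Hamiltonian through the interpolation $\psi_s$ and the closed-implies-exact argument on the simply connected $\rr^4$. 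Both constructions are sound, and in fact they produce different maps on the transition region $a < y_2 < b$ (the paper's flow conserves $\sigma(y_2)x_2$ along trajectories, which is not your rescaling), but the lemma only constrains behavior outside that region so either works. Your approach is more explicit about what $\psi$ does pointwise and makes the role of the hypothesis $m < b-a$ very visible (it is exactly what lets $f' > -1$ hold); the paper's approach is shorter and sidesteps the homotopy step by producing the Hamiltonian directly. One small remark on your last paragraph: you are right that $m < b-a$ is sharp, and the clean way to see necessity is that a diffeomorphism cannot map the open slab $\{a < y_2 < b\}$ into the codimension-one set $\{y_2 = b\}$, which would be forced if $m \geq b-a$.
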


\begin{proof} Consider the Hamiltonian $H(x_1, x_2, y_1, y_2) =
  \sigma(y_2)x_2$, where $\sigma(y_2)$ is a smooth function such that
  \begin{equation*} \sigma(y_2) =
    \begin{cases}
      0, & y_2 \geq b \\
      m, & y_2 \leq \max\{a, a+m \}.
    \end{cases}
  \end{equation*}
  It is easy to verify that $H$ generates an integrable vector field
  that gives the desired $\psi$.
\end{proof}

The second construction is that of a ``connect sum'' for
flat-at-infinity planar Lagrangians.

\begin{defn} \label{defn:connect-sum} Given two flat-at-infinity
  planar Lagrangians $L, L'$, a {\bf connect sum of $L$ and $L'$},
  denoted $L \# L'$ is defined as follows.  Assume $L$ (resp. $L'$)
  agrees with the zero-section $L_0$ outside a compact set $K$
  (resp. $K'$) of $\rr^4$.  Choose $x_1$-translations $\tau$ of $L$ and $\tau'$
  of $L'$ so that $\tau(K) \subset \{ x_1 < 0\}$ and $\tau'(K')
  \subset \{ x_1 > 0\}$.  Then $L \# L'$ is defined to be
  $$(\tau(L) \cap \{ x_1 \leq 0 \} ) \cup (\tau'(L') \cap \{ x_1 \geq 0 \} ).$$ 
\end{defn}

The following lemma shows that, up to Hamiltonian isotopy, the
construction of the connect sum does not depend on $\tau$ and $\tau'$.

\begin{lem} \label{lem:connect-sum-defn} Let $L \# L'$ and $L \tilde\#
  L'$ be two connect sums with respect to $x_1$-translations $\tau,
  \tau'$ and $\tilde \tau, \tilde \tau'$.  Then there exists a
  compactly supported area-preserving isotopy $\varphi_t$ of the
  $x_1y_1$-plane so that $(\varphi_1 \times \id) (L \# L') = L \tilde\#
  L'$.
\end{lem}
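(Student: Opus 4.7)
The plan is to construct $\varphi_t$ as the Hamiltonian isotopy of the $x_1y_1$-plane generated by a compactly supported Hamiltonian chosen to realize the required $x_1$-translations on the two perturbation regions. The key observation is that the two connect sums $L \# L'$ and $L \tilde\# L'$ differ only in the placement of the compact perturbation regions of $L$ and $L'$, which in each half-space are related by $x_1$-translations. Since $\varphi_t \times \id$ acts trivially in $x_2$ and $y_2$, and $x_1$-translations do too, it is enough to realize these shifts on the $x_1y_1$-projection of the perturbation regions.

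First, I would choose $\delta > 0$ such that $\tau(K) \cup \tilde\tau(K) \subset \{x_1 < -\delta\}$ and $\tau'(K') \cup \tilde\tau'(K') \subset \{x_1 > \delta\}$; this uses compactness of each perturbation region and the openness of the half-space containing it. Let $s_L \in \rr$ be the $x_1$-displacement from $\tau$ to $\tilde\tau$ and $s_R$ the analogous displacement for the primed data. Let $\Omega_L$ be the compact region swept out by the straight-line $x_1$-homotopy from $\pi(\tau(K))$ to $\pi(\tilde\tau(K))$; since its $x_1$-coordinates at each height are convex combinations of values in $(-\infty, -\delta]$, we have $\Omega_L \subset \{x_1 \leq -\delta\}$, and symmetrically $\Omega_R \subset \{x_1 \geq \delta\}$.

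Then I would pick compactly supported cutoffs $\rho_L, \rho_R : \rr^2 \to [0,1]$ identically $1$ on neighborhoods of $\Omega_L$, $\Omega_R$ and supported in $\{x_1 < -\delta/2\}$ and $\{x_1 > \delta/2\}$, respectively, and set
\begin{equation*}
H(x_1, y_1) = s_L\, y_1\, \rho_L(x_1, y_1) + s_R\, y_1\, \rho_R(x_1, y_1),
\end{equation*}
with signs arranged so that the flow of $cy_1$ is the $x_1$-translation by $c$ at time $1$. The flow $\varphi_t$ of $H$ is compactly supported and area-preserving; where $\rho_L \equiv 1$, its time-$1$ map is the $x_1$-translation by $s_L$, and symmetrically where $\rho_R \equiv 1$. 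Hence $\varphi_1 \times \id$ restricts to $\tilde\tau \circ \tau^{-1}$ on a neighborhood of $\tau(K)$ and to $\tilde\tau' \circ (\tau')^{-1}$ on a neighborhood of $\tau'(K')$, while being the identity on the complementary $L_0$-portion; these three pieces fit together to give $(\varphi_1 \times \id)(L \# L') = L \tilde\# L'$.

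The principal care needed is ensuring $\rho_L$ is constantly $1$ along the entire translation path, not merely at its endpoints, so that the time-$1$ flow is a genuine translation on the perturbation region; the convexity of the swept region $\Omega_L$ makes this a straightforward matter of choosing cutoffs with compatible supports, and the analogous statement on the right is disjoint from it in the $x_1$-direction, so no interference arises.
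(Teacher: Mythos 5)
Your construction is correct and follows essentially the same route as the paper's: produce a compactly supported area-preserving isotopy of the $x_1y_1$-plane that realizes the two required $x_1$-translations on the perturbation regions while preserving $\{y_1=0\}$ (which your $H$ does, since $H(x_1,0)=0$) and fixing $\{x_1=0\}$. Two small wording repairs, neither of which damages the argument: (i) the time-one flow is \emph{not} the identity on the entire complementary $L_0$-portion---inside the annuli where $\rho_L,\rho_R$ fall between $0$ and $1$ it shifts points of the $x_1$-axis---but it \emph{preserves} $\{y_1=0\}$ and each half-plane, which together with bijectivity (and the fact that $\varphi_1$ sends $\pi(\tau(K))$ onto $\pi(\tilde\tau(K))$ by translation) is exactly what is needed to conclude $(\varphi_1\times\id)(L\#L')=L\tilde\#L'$; and (ii) the swept region $\Omega_L$ need not be convex, but convexity is not actually used---all you need is that $\Omega_L$ contains the full translation segment of each point of $\pi(\tau(K))$, which holds by construction.
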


\begin{proof} Choose sets $U, U'$ in the $x_1y_1$-plane containing the
  projections of $K, K'$ of Definition~\ref{defn:connect-sum}.  There
  exists a compactly supported area-preserving isotopy $\varphi_t$
  translating $\tau(U)$ to $\tilde\tau(U)$ and $\tau'(U')$ to
  $\tilde\tau'(U')$ that preserves $\{ y_1 = 0 \}$.  It follows that
  $(\varphi_1 \times \id) (L \# L') = L \tilde\# L'$, as desired.
\end{proof}

With these two constructions, we can prove the second part of the
proposition:

\begin{proof}[Proof of Proposition~\ref{prop:add-well-def}(2)]
  Suppose that $S \in \mathbf{S}$, $S\subset \{ x_1 < 0 \}$,  $i_a(S) = L_a$, and that 
  $S' \in
  \mathbf{S}'$, $S' \subset \{ x_1 > 0\}$, and  $i_b(S') = L'_b$.  By Lemma~\ref{lem:stretch}, we
  may assume that $a=b$.  At the cost of passing to equivalent slices
  using the compactly supported translation construction from the
  proof of part (1), we may then conclude that $(L \# L')_a$ is a
  representative of $\mathbf{S} + \mathbf{S}'$.
\end{proof}

\subsection{Proof of Theorem~\ref{thm:add-rel}(2)}

We begin by generalizing the level-shifting lemma:

\begin{lem} \label{lem:move-levels} Consider a flat-at-infinity planar
  Lagrangian $L$ and its slices $L_a$, $L_b$ for some $a < b < 0$.
  Then for all $c < d < 0$, there exists a flat-at-infinity planar
  Lagrangian $M$ so that $L_a \sim M_c$ and $L_b \sim M_d$.
\end{lem}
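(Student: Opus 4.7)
The plan is to construct a symplectic diffeomorphism $\Phi$ of $\rr^4$ that sends the hyperplane $\{y_2 = a\}$ to $\{y_2 = c\}$ and $\{y_2 = b\}$ to $\{y_2 = d\}$, acts as the identity on each of these two hyperplanes (viewed as copies of $\rr^3$), and fixes $L_0$ pointwise. Setting $M := \Phi(L)$, the slices $M_c$ and $M_d$ will then equal $L_a$ and $L_b$ as subsets of $\rr^3$ under the identifications $i_c$ and $i_d$, so the equivalences $L_a \sim M_c$ and $L_b \sim M_d$ will be realized trivially by the identity of the $x_1 y_1$-plane. (An alternative would be to iterate Lemma~\ref{lem:stretch} to move the two levels one at a time, but a single global construction seems cleaner.)

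Concretely, I would first choose a smooth orientation-preserving diffeomorphism $\alpha : \rr \to \rr$ satisfying (i) $\alpha(a) = c$ and $\alpha(b) = d$; (ii) $\alpha'(a) = \alpha'(b) = 1$; (iii) $\alpha'(y_2) > 0$ everywhere; and (iv) $\alpha = \id$ outside a compact interval $[A, B] \subset (-\infty, 0)$. Such an $\alpha$ exists by routine smooth interpolation: pick $A < \min\{a, c\}$ and $B \in (\max\{b, d\},\, 0)$, then interpolate smoothly on each of $[A, a]$, $[a, b]$, and $[b, B]$ with the prescribed endpoint values and slope $1$ at the endpoints. Define
$$
\Phi(x_1, x_2, y_1, y_2) := \left(x_1, \; \frac{x_2}{\alpha'(y_2)}, \; y_1, \; \alpha(y_2)\right),
$$
and a direct computation gives $\Phi^*(dx_1 \wedge dy_1 + dx_2 \wedge dy_2) = dx_1 \wedge dy_1 + dx_2 \wedge dy_2$, so $\Phi$ is a symplectic diffeomorphism.

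To finish, set $M := \Phi(L)$. Since $\Phi$ is a diffeomorphism preserving the symplectic form and $L$ is a planar Lagrangian, $M$ is an embedded planar Lagrangian. Because $0 \notin [A, B]$, we have $\alpha(0) = 0$ and $\alpha'(0) = 1$, so $\Phi$ fixes $L_0$ pointwise; hence $M$ agrees with $L_0$ outside the compact set $\Phi(K)$, where $K$ is any compact set outside which $L$ agrees with $L_0$. For $p = (x_1, x_2, y_1, a) \in L_a$, the conditions $\alpha(a) = c$ and $\alpha'(a) = 1$ give $\Phi(p) = (x_1, x_2, y_1, c)$, so $M_c$ equals $L_a$ under the standard identifications, and likewise $M_d$ equals $L_b$. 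The one subtlety that must not be overlooked is the requirement $\alpha'(a) = \alpha'(b) = 1$: without it, $\Phi$ would dilate the $x_2$-coordinate at the two slicing levels, and the equivalence relation $\sim$ --- which only permits compactly supported area-preserving diffeomorphisms of the $x_1 y_1$-plane crossed with the identity in $x_2$ --- cannot absorb such a dilation. Aside from correctly setting up $\alpha$, every other verification is a direct computation, and I do not anticipate any serious obstacles.
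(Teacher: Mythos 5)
Your proof is correct, but it takes a genuinely different route from the paper's. The paper's proof is a single sentence: it iterates Lemma~\ref{lem:stretch}, moving one slicing level at a time by composing the Hamiltonian ``level-shifting'' diffeomorphisms constructed there. You instead build a single global symplectomorphism
$\Phi(x_1,x_2,y_1,y_2) = \bigl(x_1,\ x_2/\alpha'(y_2),\ y_1,\ \alpha(y_2)\bigr)$,
which reparameterizes $y_2$ by an increasing diffeomorphism $\alpha$ of $\rr$ and compensates in the conjugate $x_2$-coordinate so that $dx_2 \wedge dy_2$ is preserved. The observation you flag --- that one must impose $\alpha'(a) = \alpha'(b) = 1$ so that $\Phi$ does not dilate $x_2$ at the slicing levels, since $\sim$ only absorbs area-preserving diffeomorphisms of the $x_1y_1$-plane crossed with the identity in $x_2$ --- is exactly the nontrivial point and you have handled it. The paper's approach is more economical because it reuses Lemma~\ref{lem:stretch}, whose Hamiltonian flow is an honest $y_2$-translation on the relevant region and so never introduces an $x_2$-dilation to begin with; your approach is more self-contained and gives $M$ in one explicit stroke, at the cost of carefully prescribing $\alpha$. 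One small refinement you might make explicit: to guarantee $\alpha$ is $C^\infty$ globally and equals the identity outside $[A,B]$, the interpolation should match all derivatives of the identity at $A$ and $B$ (not only the first); this is routine, for instance by choosing $\alpha'$ to equal $1$ on neighborhoods of $A$, $a$, $b$, $B$, and the complement of $[A,B]$.
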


\begin{proof} $M$ is obtained by applying a composition of
  level-preserving Hamiltonian diffeomorphisms as in
  Lemma~\ref{lem:stretch} to $L$.
\end{proof}

We can now easily prove that the strict relation $\srel$ is compatible
with $+$:

\begin{proof}[Proof of Theorem~\ref{thm:add-rel}(2)]
  We know there exist flat-at-infinity planar Lagrangians $L$ and $M$
  and representatives $S \in \mathbf{S}$, etc., such that for some $a
  < b < 0$ and $c < d < 0$, 
  \begin{align*}
    i_b(S') &= L_b & i_d(T') &= M_d\\
    i_a(S) &= L_a & i_c(T) &= M_c.
  \end{align*}
  By Lemma~\ref{lem:move-levels}, we may assume that $a = c$ and $b =
  d$.  Then the connect sum $L \# M$ gives a flat-at-infinity planar
  Lagrangian with $[(L \# M)_a] = \mathbf{S} + \mathbf{T}$ and $[(L \#
  M)_b] = \mathbf{S}' + \mathbf{T}'$.
\end{proof}

\section{Constructions of Slices and Cobordisms}
\label{sec:constr}

Figure~\ref{fig:possible-slices} gives examples of
$x_1y_1$-diagrams of negative slices.  The unknotted figure-$8$
diagram with a positive self-crossing bounding two lobes of area $A$
shown in Figure~\ref{fig:possible-slices}(a) will be denoted by
$8^+(A)$.  Choose $A_1, A_2, A_3$ so that $A_1 - A_2 + A_3 > 0$.  Then
the ``caterpillar'' with three crossings of signs $+, -, +$ and four
lobes of areas $A_1, A_2, A_3$, and $A_4= A_1-A_2+A_3$ shown in (b)
will be denoted by $C^{+ - +}(A_1, A_2, A_3)$.  The two component sum
of unknotted figure-8's with positive crossings shown in (c) will be
denoted by $8^+(A_1) + 8^+(A_2)$.  The figure-8 ``inside'' another
figure-8 shown in (d) will be denoted by $8^-(A_1) \odot 8^+(A_2)$.
Lastly, the ``merged'' figure-8's shown in (e) will be denoted by
$8^-(A_1, A_2) \times 8^+(A_3)$.

That each of these curves is the diagram of a negative slice of an
embedded, flat-at-infinity planar Lagrangian may be proven by
examining the graph of $dF$ for an explicit function $F: \rr^2 \to
\rr$.  In \cite{josh-lisa:cap}, an explicit generating family is
constructed to generate the curve in (a), and modifications of this
construction can be used to generate many of the specified slices.
Other slices were found through computer-aided calculations; the first
two authors wrote programs in \texttt{Mathematica} to explore the
shapes that can be realized as slices and to see how the slices
evolve.\footnote{The programs are user-friendly and can be found in a
  web appendix to this paper located in the ``Research'' section of
  the web page

  \centerline{\texttt{http://www.haverford.edu/math/jsabloff}} 

  \noindent We invite readers to use and experiment with these
  programs.}  For example, in Figure~\ref{fig:possible-slices},
explicit calculations show that the lifts of the top figure-8 diagram together with
lifts of the diagrams in 
each of the two columns represent sequences of related slices:
\begin{enumerate}
\item There exist areas $A_1, A_2, B_1, B_2, B_3, C$ so that
  $$ [\t 8^+(A_1) + \t 8^+(A_2)] \srel [\t C^{+-+}(B_1,B_2,B_3)] \srel
  [\t 8^+(C)];$$
\item There exist areas $A_1, A_2, A_3, B_1, B_2, C$ so that
  $$ [\t 8^-(A_1, A_2) \times \t 8^+(A_3)] \srel [\t 8^-(B_1) \odot \t
  8^+(B_2)] \srel [\t 8^+(C)].$$
\end{enumerate}

\section{Obstructions to Slices and Cobordisms}
\label{sec:obstr}

In order to show that a given curve cannot appear as a negative slice,
or to show that two slices are not related, we will employ the slice
capacity machinery developed in \cite{josh-lisa:cap}.  In particular,
we will be able to prove that the curves in
Figure~\ref{fig:impossible-slices} cannot be diagrams of any element of
$\slices$.  We will also prove parts (1) and (2) of
Theorem~\ref{thm:rel-props} and part (1) of Theorem~\ref{thm:add-rel}.

\subsection{Capacities} 
\label{subsec:capacities}

For each slice $L_a$ of a flat-at-infinity planar Lagrangian $L$ at a
generic height $a$, we define two lower and two upper capacities:
\begin{equation*} c^{L, a}_\pm : H^*(L_a) \to (-\infty, 0], \qquad
  C^{L,a}_\pm: H^*(L_a) \to [0, \infty).
\end{equation*} 
Below is a brief description of the construction of these capacities
from the theory of generating families.  Full details and citations
can be found in \cite{josh-lisa:cap}; the original constructions that
inspired these capacities appear in Viterbo's paper
\cite{viterbo:generating}.
 
If $L \subset \rr^{4}$ is a flat-at-infinity planar Lagrangian then
there is a quadratic-at-infinity generating family $F: \rr^2 \times
\rr^N \to \rr$ for $L$.  In particular,
$$L = \left\{ \left(x_1, x_2 , \frac{\partial F}{\partial x_1} (x_1, x_2, \e),
    \frac{\partial F}{\partial x_2} (x_1, x_2, \e) \right):
  \frac{\partial F}{\partial \e} = 0 \right\}.$$ Moreover, this
quadratic-at-infinity generating family is unique up to addition of a
constant, fiber-preserving diffeomorphism, and stabilization.  To
study a slice $L_a$ of $L$, we consider the \textbf{difference
  function}
\begin{equation} \Delta_a: \rr \times \rr^{1+N} \times \rr^{1+N} \to
  \rr
\end{equation} 
defined by:
\begin{equation} \Delta_a(x_1, x_2, \e, \tilde{x}_2, \tilde{\e}) =
  F(x_1, x_2, \e) - F(x_1, \tilde{x}_2, \tilde{\e}) - a(x_2 -
  \tilde{x}_2).
\end{equation} 
The difference function is Morse-Bott.  Its critical points are of two
types:
\begin{enumerate}
\item For each double point $(x_1, y_1)$ of $\proj(L_a) \subset
  \rr^2$, there are two non-degenerate critical points $(x_1, x_2,
  \mathbf{e}, \tilde x_2, \tilde {\mathbf{e}} ) $ and $(x_1, \tilde
  x_2, \tilde{\mathbf{e}}, x_2, \mathbf{e})$ whose critical values are
  either both $0$ or are $\pm v$, for some $v \neq 0$.
\item A non-degenerate critical submanifold diffeomorphic to $L_a$
  with critical value $0$ and index $1+ N$.
\end{enumerate}
The critical value and index of a critical point $(x_1, x_2,
\mathbf{e}, \tilde x_2, \tilde {\mathbf{e}})$ corresponding to a
crossing between two branches of the same component of $L_a$ may be
calculated from a diagram of $L_a$. Choose a ``capping path'' $\gamma$
in $L_a$ that starts at the image of $(x_1,\tilde{x}_2, \tilde{\e})$
and ends at the image of $(x_1, x_2, \e)$.  The critical value is the
negative signed area of the region bounded by $\gamma$, and the index
is equal to $N+1 - \mu(\bar{\Gamma})$, where $\bar{\Gamma}$ is the
closure of the loop of subspaces $T\gamma(t)$ by a clockwise rotation
and $\mu$ is the Maslov index; see Section 6 of \cite{josh-lisa:cap}.

The distinction between positive and negative capacities comes from a
splitting of the domain $\rr \times \rr^{1+N} \times \rr^{1+N}$ into
positive and negative pieces:
\begin{equation*}
  \mathcal{P}_+ = \bigl\{ x_1, x_2, \mathbf{e},
  \tilde{x}_2, \tilde{\mathbf{e}})\; : \; x_2 \leq  \tilde{x}_2
  \bigr\}, \qquad \mathcal{P}_- = \bigl\{ (x_1, x_2, \mathbf{e},
  \tilde{x}_2, \tilde{\mathbf{e}})\; : \; x_2 \geq  \tilde{x}_2
  \bigr\}.
\end{equation*}
Denote the sublevel sets of $\Delta_a$ by $\Delta_a^\lambda$, and then
define
\begin{equation} \Delta_{a,\pm}^\lambda = \Delta_a^\lambda \cap
  \mathcal{P}_\pm. \label{eqn: split-sublevels}
\end{equation} 
For $\eta > 0$ chosen so that $0$ is the only critical value of
$\Delta_a$ in $[-\eta, \eta]$, the following maps may be defined:

\begin{equation}
  \begin{split}
    \varphi_{a,\pm}^\lambda : H^k(L_a) &\to H^{k+N+1}(\Delta_{a,
      \pm}^\eta, \Delta_{a, \pm}^\lambda),
    \quad \text{ and}\\
    \Phi_{a,\pm}^\Lambda : H^k(L_a) &\to H^{k+N+2}(\Delta_{a,
      \pm}^\Lambda, \Delta_{a, \pm}^\eta).
 \end{split}
\end{equation}
The first is the composition of the Thom isomorphism, the map $$p:
H^{k+N+1}(\Delta^\eta_a, \Delta^{-\eta}_a) \to
H^{k+N+1}(\Delta^\eta_a, \Delta^\lambda_a)$$ in the exact sequence of
the triple $(\Delta^\eta_a, \Delta^{-\eta}_a, \Delta^\lambda_a)$, and
a map in a Mayer-Vietoris sequence relating $\Delta^\lambda_a$ to
$\Delta^\lambda_{a,\pm}$.  The second map is similar, with the
connecting homomorphism of the exact sequence of the triple
$(\Delta^\Lambda_a, \Delta^\eta_a, \Delta^{-\eta}_a)$ replacing $p$.

The lower and upper capacities are then defined to be:
\begin{equation}
\begin{split}
  c_\pm^{L,a} (u) &= \sup \{ \lambda < 0 : \varphi_{a, \pm}^\lambda(u) = 0 \}, \\
  C_\pm^{L,a} (u) &= \inf \{ \Lambda > 0 : \Phi_{a, \pm}^\Lambda(u)
  \neq 0 \}.
\end{split}
\end{equation}
In both cases, if the set is empty then the capacity is $0$.  The
capacities $c_\pm^{L, a}(u)$ and $C_\pm^{L,a}(u)$ are critical values
of $\Delta_a$ and are independent of
the generating family $F$ used to define $L$.

\subsection{Generalities on the Computation of Capacities}
\label{ssec:compute-cap}

In this paper, we use the following three properties of the
capacities:

\begin{enumerate}
\item {\bf (Invariance)} If $L^0$ and $L^1$ are flat-at-infinity
  planar Lagrangians that are isotopic via a compactly supported
  Hamiltonian isotopy that sends the hyperplane $\{y_2 = a\}$ to
  itself then $c^{L^0, a} (u) = c^{L^1, a}(u)$, for any of the four
  capacities and any cohomology class $u$.\footnote{This is slightly
    different from the statement of invariance in
    \cite{josh-lisa:cap}, but the proof is exactly the same.}
\item {\bf (Monotonicity)} Suppose $a < b < 0$, and $a$ and $b$ are
  generic heights of a flat-at-infinity planar Lagrangian $L \subset
  \rr^4$.  Let $W = \bigcup_{t \in [a,b]} L_t$ be the cobordism
  between $L_a$ and $L_b$ given by $L$, and let $j_t: L_t \to W$ be
  the inclusion map.  If $u \in H^*(W)$ then:
  \begin{align*} c^{L, a}_+(j^*_a u) &\leq c^{L, b}_+(j^*_bu), & C^{L,
      a}_+(j^*_au), &\leq C^{L, b}_+(j^*_bu), \\ c^{L, a}_-(j^*_au) &\geq
    c^{L, b}_-(j^*_bu), & C^{L, a}_-(j^*_au) &\geq C^{L, b}_-(j^*_bu).
  \end{align*}
  If at least one of the capacities in an inequality above is
  nonzero, the inequality is strict.
\item {\bf (Non-Vanishing)} For any generic, nonempty slice $L_a$ of a
  flat-at-infinity planar Lagrangian $L \subset \rr^4$ and for any
  nonzero $u \in H^*(L_a)$, at least one of the four capacities
  $c_{\pm}^{L, a} (u) , C_{\pm}^{L, a} (u)$ is nonzero.
\end{enumerate}

Although the capacities for a slice depend on the entire Lagrangian,
it is sometimes possible to compute these numbers only knowing the
slice $L_a$.  The capacities always lie at critical values of the
difference function $\Delta_a$, so their calculation relies on the
computation of critical values and indices of critical points of
$\Delta_a$ and, in more complicated situations, on examination of the
exact sequences used to define $\varphi_{a,\pm}^\lambda$ and
$\Phi_{a,\pm}^\Lambda$.  A foundational computation is the following:

\begin{thm}[\cite{josh-lisa:cap}] \label{thm:fig8}
  \begin{enumerate}
  \item If $L$ is any flat-at-infinity planar Lagrangian with slice
    $L_a$ having diagram $8^-(A)$ then, for $0 \neq u \in H^0(L_a)$
    and $0 \neq v \in H^1(L_a)$,
    $$c_+^{L,a}(u) = -A, \quad c_-^{L,a}(u) = 0, \quad C_+^{L,a}(v) =
    0, \quad C_-^{L,a}(v) = A.$$
  \item If $L$ is any flat-at-infinity planar Lagrangian with slice
    $L_a$ having diagram $8^+(A)$ then, for $0 \neq u \in H^0(L_a)$
    and $0 \neq v \in H^1(L_a)$,
    $$c_+^{L,a}(u) = 0, \quad c_-^{L,a}(u) = -A, \quad C_+^{L,a}(v) =
    A, \quad C_-^{L,a}(v) = 0.$$
  \end{enumerate}
\end{thm}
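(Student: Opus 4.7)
The proof is a direct Morse--Bott computation on the critical set of the difference function $\Delta_a$, and the plan has three steps.

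First, choose any quadratic-at-infinity generating family $F \colon \rr^2 \times \rr^N \to \rr$ for $L$, and form the associated difference function $\Delta_a$. By the description of critical points recalled in Section~\ref{subsec:capacities}, $\Delta_a$ has exactly (i) one non-degenerate critical submanifold diffeomorphic to $L_a \cong S^1$ at critical value $0$ of index $N+1$, and (ii) two isolated non-degenerate critical points $p_+, p_-$, one lying in each of $\mathcal{P}_+$ and $\mathcal{P}_-$, arising from the unique double point of $\proj(L_a)$.

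Second, read off the critical values and indices of $p_\pm$ from the diagram via the capping-path formulas of Section 6 of \cite{josh-lisa:cap}. Any capping path in $L_a$ between the two preimages of the double point encloses one of the two lobes of the figure-$8$, which has signed area $\pm A$. Taking negative signed area yields the pair of critical values $\{\Delta_a(p_+), \Delta_a(p_-)\} = \{+A, -A\}$. The key sign check is that the positive crossing of $8^+(A)$ places the value $-A$ in $\mathcal{P}_-$ and $+A$ in $\mathcal{P}_+$, while the negative crossing of $8^-(A)$ reverses this assignment. Together with the Maslov-index formula $N+1-\mu(\bar{\Gamma})$, this also pins down the Morse indices of $p_\pm$ needed below.

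Third, extract the capacities from this data. For $0 \neq u \in H^0(L_a)$, the class $\varphi_{a,\pm}^\lambda(u)$ is obtained by applying the Thom isomorphism to the critical manifold $L_a$ and mapping the result into $H^{N+1}(\Delta_{a,\pm}^\eta, \Delta_{a,\pm}^\lambda)$. The Thom class survives in the restricted sublevel set precisely until $\lambda$ drops past a negative critical value in $\mathcal{P}_\pm$ whose attached cell has the correct degree to kill it. For $8^+(A)$ the only such critical value lies in $\mathcal{P}_-$ at $-A$, yielding $c_-^{L,a}(u) = -A$ and $c_+^{L,a}(u) = 0$. A dual analysis of $\Phi_{a,\pm}^\Lambda$ on a generator $v \in H^1(L_a)$ detects the positive critical value $+A$ sitting in $\mathcal{P}_+$, giving $C_+^{L,a}(v) = A$ and $C_-^{L,a}(v) = 0$. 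The statement for $8^-(A)$ follows by symmetry with the roles of $\mathcal{P}_+$ and $\mathcal{P}_-$ interchanged throughout.

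The main obstacle will be pinning down the correspondence between the sign of the crossing (over/under in the $x_2$-coordinate) and which of $\mathcal{P}_\pm$ contains the critical point of value $-A$. This requires carefully aligning three interrelated sign conventions: the over/under assignment dictated by $x_2$, the orientation of the capping path (from the image of $\tilde{x}_2$-data to the image of $x_2$-data), and the sign of the signed area enclosed by that path. Once these conventions are consistently fixed, the Morse-theoretic computation of the relative sublevel sets is essentially forced by the fact that exactly one non-degenerate critical point lies in each of $\mathcal{P}_+$ and $\mathcal{P}_-$, and the capacities fall out of the two nonzero critical values.
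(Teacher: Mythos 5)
The paper does not actually prove Theorem~\ref{thm:fig8}: it is imported verbatim from \cite{josh-lisa:cap}, so there is no ``paper's own proof'' to compare against directly. What the paper \emph{does} give is a template for such computations in the analogous Lemmas~\ref{lem:8+8} and~\ref{lem:capcat}, and judging your proposal against that template is instructive.

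Your critical-point bookkeeping is correct and consistent with the paper's conventions. For the figure-$8$ there is exactly one double point, giving a single non-degenerate critical point in each of $\mathcal{P}_\pm$, and the index-duality $p \leftrightarrow \bar{p}$ (swapping $(x_2,\e)$ with $(\tilde{x}_2,\tilde\e)$) forces the index pair $\{N, N+3\}$ and the value pair $\{-A, +A\}$; Lemma~\ref{lem:8+8} confirms that for $8^+$ the value $-A$ sits in $\mathcal{P}_-$ at index $N$ and $+A$ sits in $\mathcal{P}_+$ at index $N+3$, which matches your sign check. You are also right that this sign alignment is the delicate step.

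Where you diverge from the paper's method --- and where a gap opens --- is in step three. You argue \emph{directly} that the Thom class in $H^{N+1}(\Delta_{a,\pm}^\eta, \Delta_{a,\pm}^\lambda)$ dies precisely when $\lambda$ crosses the value of the index-$N$ critical point, and dually that $\Phi_{a,+}^\Lambda(v)$ becomes non-zero precisely when $\Lambda$ crosses the value of the index-$(N+3)$ point. But having a critical point of the right index at the right value only means the relevant relative cohomology group \emph{can} change; it does not by itself show the attaching map actually kills (or detects) the image of the Thom class. That non-triviality is exactly what needs to be proved, and your proposal asserts it rather than establishing it. The paper's machinery avoids this: Lemma~\ref{lem:cap-calc} uses the \emph{absence} of critical points of appropriate index and sign to force three of the four capacities on a given class to vanish, and then the Non-Vanishing property forces the remaining one to be non-zero, hence equal to the unique available critical value. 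Applied to $8^+(A)$ with $u \in H^0(L_a)$: no index-$N$ negative-value point in $\mathcal{P}_+$ gives $c_+(u) = 0$; no index-$(N+2)$ positive-value point anywhere gives $C_\pm(u) = 0$; Non-Vanishing then yields $c_-(u) = -A$. The identical argument on $v \in H^1(L_a)$ (now looking for index $N+1$ and $N+3$) gives $C_+(v) = A$ and the other three zero, and swapping $\mathcal{P}_+ \leftrightarrow \mathcal{P}_-$ handles $8^-(A)$. This indirect route is both shorter and avoids the attaching-map analysis entirely; I would recommend replacing your third step with it.
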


Implicit in the proof of the theorem above is the following general
computational principle:

\begin{lem} \label{lem:cap-calc} Suppose that $u \in H^k(L_a)$.  If
  there is no critical point of $\Delta_a$ in $\mathcal{P}_\pm$ of
  index $k + N$ (resp. $k+N+2$) and negative (resp. positive) critical
  value then $c_\pm^{L,a}(u) = 0$ (resp. $C_\pm^{L,a}(u) = 0$).
\end{lem}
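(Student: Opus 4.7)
My plan is to prove each of the four statements by directly tracking when the map appearing in the definition of the capacity is forced either to vanish or to remain injective on the Thom image. I would first dispose of $u = 0$ trivially: both $\varphi_{a,\pm}^\lambda(u)$ and $\Phi_{a,\pm}^\Lambda(u)$ are zero for such $u$, so by the convention at the end of the definition the capacity equals $0$. So I assume throughout that $u \neq 0$.

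For the upper capacities $C_\pm^{L,a}$, my approach is especially direct. The target of $\Phi_{a,\pm}^\Lambda$ is $H^{k+N+2}(\Delta_{a,\pm}^\Lambda, \Delta_{a,\pm}^\eta)$, which by standard Morse theory is generated by the non-degenerate critical points of $\Delta_a$ lying in $\mathcal{P}_\pm$ with critical value in $(\eta, \Lambda]$ and index $k+N+2$ (the Morse--Bott critical submanifold at level $0$ does not contribute, since its critical value falls below $\eta$). Under the hypothesis of the lemma these points do not exist for any $\Lambda > 0$, so this cohomology group vanishes; hence $\Phi_{a,\pm}^\Lambda(u) = 0$ for all $\Lambda > 0$, the defining set is empty, and $C_\pm^{L,a}(u) = 0$.

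For the lower capacities $c_\pm^{L,a}$, my strategy is to show that $\varphi_{a,\pm}^\lambda(u) \neq 0$ for every $\lambda < 0$, which makes the defining set empty and forces $c_\pm^{L,a}(u) = 0$. I would factor $\varphi_{a,\pm}^\lambda$ (using the Mayer--Vietoris comparison built into its definition) as the Thom-style isomorphism $H^k(L_a) \xrightarrow{\cong} H^{k+N+1}(\Delta_{a,\pm}^\eta, \Delta_{a,\pm}^{-\eta})$ coming from the critical submanifold $L_a$ at level $0$, followed by the natural map $p_\pm\colon H^{k+N+1}(\Delta_{a,\pm}^\eta, \Delta_{a,\pm}^{-\eta}) \to H^{k+N+1}(\Delta_{a,\pm}^\eta, \Delta_{a,\pm}^\lambda)$ arising from the triple $(\Delta_{a,\pm}^\eta, \Delta_{a,\pm}^{-\eta}, \Delta_{a,\pm}^\lambda)$. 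The long exact sequence of that triple identifies $\ker p_\pm$ with the image of the connecting homomorphism from $H^{k+N}(\Delta_{a,\pm}^{-\eta}, \Delta_{a,\pm}^\lambda)$, which by Morse theory is generated by the non-degenerate critical points of $\Delta_a$ in $\mathcal{P}_\pm$ of index $k+N$ with critical value in $(\lambda, -\eta]$. Since the hypothesis rules these out for every $\lambda < 0$, $p_\pm$ is injective; the Thom image of $u \neq 0$ is nonzero, so $\varphi_{a,\pm}^\lambda(u)$ is nonzero as well.

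The hard part will be the bookkeeping needed to transfer from the full-space constructions appearing in the original definitions of $\varphi_{a,\pm}^\lambda$ and $\Phi_{a,\pm}^\Lambda$ to the $\mathcal{P}_\pm$-versions used in the Morse count above. Concretely, one has to combine the long exact sequence of the relevant triple with the Mayer--Vietoris sequence relating $\Delta_a^\mu$ to $\Delta_{a,+}^\mu$ and $\Delta_{a,-}^\mu$ so that the Thom class in the full complex really is identified with a Thom class for the $\mathcal{P}_\pm$ piece, and so that the Morse-theoretic decomposition of $H^*(\Delta_{a,\pm}^{\mu_1}, \Delta_{a,\pm}^{\mu_2})$ sees only the critical points of $\Delta_a$ lying in $\mathcal{P}_\pm$. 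This naturality is precisely the framework developed in \cite{josh-lisa:cap}; once it is in hand, the argument above is routine.
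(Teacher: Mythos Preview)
Your argument is correct: the vanishing of $C_\pm$ follows because the target cohomology group $H^{k+N+2}(\Delta_{a,\pm}^\Lambda,\Delta_{a,\pm}^\eta)$ has no Morse generators under the hypothesis, and the vanishing of $c_\pm$ follows because the kernel of $p_\pm$ is fed by $H^{k+N}(\Delta_{a,\pm}^{-\eta},\Delta_{a,\pm}^\lambda)$, which likewise has no generators. The bookkeeping you flag in your last paragraph (passing from the full $\Delta_a$ triple to the $\mathcal{P}_\pm$-restricted one via Mayer--Vietoris, and identifying the Thom class on the $\pm$ side) is exactly the machinery set up in \cite{josh-lisa:cap}, and you are right to invoke it.

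As for comparison: the paper does not actually supply a proof of this lemma. It is stated as a ``general computational principle'' that is \emph{implicit in the proof of the theorem above} (Theorem~\ref{thm:fig8}, proved in \cite{josh-lisa:cap}), with no further argument given. Your write-up is therefore not an alternative to the paper's proof but rather an explicit unpacking of what the paper leaves to the reference; it is the natural Morse-theoretic justification one would extract from that source.
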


Another useful lemma is:

\begin{lem} \label{lem:vanishing-capacities} Let $a < 0$ be a generic
  height of a flat-at-infinity planar Lagrangian $L$, and let
  $j_a: L_a \to L$ be
  the inclusion map.  For any $u \in
  H^*(L)$, $c_+^{L, a} (j_a^*u) = 0$ and $C_-^{L, a} (j_a^*u) = 0$.
\end{lem}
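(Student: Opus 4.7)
My plan is to apply the Monotonicity property from Section~\ref{ssec:compute-cap} to a cobordism whose lower slice is empty, thereby pinching both capacities to zero.

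First, I would use the flat-at-infinity hypothesis: since $L$ agrees with $L_0 \subset \{y_2 = 0\}$ outside a compact subset $K \subset \rr^4$, the intersection $L \cap \{y_2 = a''\}$ is contained in $K$ for every $a'' \neq 0$, and so is empty once $|a''|$ is large enough. Fix any such $a'' < a$; it is automatically a generic height and $L_{a''} = \emptyset$. Since $H^*(L_{a''}) = 0$, the only class there is $0$, and the defining maps $\varphi^\lambda_{a'',+}$ and $\Phi^\Lambda_{a'',-}$ vanish on $0$ for every $\lambda < 0$ and $\Lambda > 0$, so by the conventions set in Section~\ref{subsec:capacities} both $c_+^{L, a''}(0)$ and $C_-^{L, a''}(0)$ equal $0$.

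Next, I would form the cobordism $W = L \cap \{a'' \leq y_2 \leq a\}$ and restrict: given $u \in H^*(L)$, let $u|_W \in H^*(W)$ be its image. Because the inclusion $L_a \hookrightarrow L$ factors as $L_a \hookrightarrow W \hookrightarrow L$, the pullback of $u|_W$ under $L_a \hookrightarrow W$ equals $j_a^*u$, while its pullback to $L_{a''} = \emptyset$ is $0$. The Monotonicity property applied to $u|_W$ then gives
\begin{equation*}
  0 = c_+^{L, a''}(0) \leq c_+^{L, a}(j_a^*u), \qquad 0 = C_-^{L, a''}(0) \geq C_-^{L, a}(j_a^*u).
\end{equation*}
Combined with the sign restrictions $c_+^{L, a}(\cdot) \in (-\infty, 0]$ and $C_-^{L, a}(\cdot) \in [0, \infty)$ built into the definitions, these inequalities pinch both values to $0$, yielding $c_+^{L, a}(j_a^*u) = 0$ and $C_-^{L, a}(j_a^*u) = 0$.

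The main obstacle I foresee is confirming that the Monotonicity statement really does apply when the lower slice of the cobordism is empty: its hypothesis is stated for a cobordism between two generic slices, and while an empty slice is a generic height, one should verify that the continuation / sub-level set argument underlying Monotonicity in \cite{josh-lisa:cap} goes through vacuously at the empty end. I expect this to be a routine check rather than a substantive difficulty, since both the Thom-isomorphism domain and the target of the relevant maps collapse trivially over the empty side.
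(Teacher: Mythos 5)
Your proposal is correct and follows essentially the same route as the paper: choose $a' < a$ with $L_{a'} = \emptyset$ using the flat-at-infinity hypothesis, note the capacities vanish there, and apply Monotonicity together with the built-in sign constraints $c_+^{L,a}(\cdot) \leq 0$ and $C_-^{L,a}(\cdot) \geq 0$ to pinch both values to zero. Your closing worry about applying Monotonicity when one end of the cobordism is empty is reasonable caution, but the paper treats this case as unproblematic, exactly as you anticipate.
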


\begin{proof} Since $L$ is flat-at-infinity, there exists $a' < a$ so
  that $L_{a'} = \emptyset$.  Since $c_+^{L, a'}(j_{a'}^*u)$ and $C_-^{L,
    a'}(j_{a'}^*u)$ vanish, Monotonicity and the definition of the
  capacities imply that:
  \begin{equation*}
    \begin{split}
      0 &= c_+^{L, a'}(j_{a'}^*u) \leq c_+^{L, a}(j_a^*u) \leq 0, \\
      0 &= C_-^{L, a'}(j_{a'}^*u) \geq C_-^{L, a}(j_a^*u) \geq 0.
    \end{split}
  \end{equation*}
\end{proof}

\subsection{Obstructions to the Existence of Slices}
\label{ssec:slice-obstr}

We begin our exploration of the use of capacities as obstructions by
showing that certain curves cannot appear as negative slices of
flat-at-infinity planar Lagrangians.  The simplest example is that
$8^-(A)$ cannot be the diagram of a negative slice; this follows
immediately from Theorem~\ref{thm:fig8} and
Lemma~\ref{lem:vanishing-capacities}.

More interestingly, some slight modifications of the realizable
caterpillar diagram in Figure~\ref{fig:possible-slices}(b) cannot be the
diagram of a negative slice.

\begin{prop} \label{prop:impossible_cat} Consider $A_1, A_2, A_3$ so
  that $A_1 - A_2 + A_3 > 0$.  Then neither of the curves $C^{- \pm
    -}(A_1, A_2, A_3)$ pictured in
  Figure~\ref{fig:impossible-slices}(b,c) can be the diagram of a
  generic negative slice of any flat-at-infinity planar Lagrangian.
\end{prop}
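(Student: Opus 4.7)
The plan is to reproduce, in the caterpillar setting, the obstruction that eliminates $8^-(A)$ as a negative slice: combine Lemma~\ref{lem:vanishing-capacities} with a direct capacity computation from the diagram to reach a contradiction. Suppose for contradiction that $L$ is a flat-at-infinity planar Lagrangian whose generic negative slice $L_a$ has diagram $C^{-\pm-}(A_1,A_2,A_3)$. Since $L \cong \rr^2$ and the caterpillar is a single simple closed curve, both $L$ and $L_a$ are connected, so the inclusion $j_a : L_a \hookrightarrow L$ induces a surjection $j_a^* : H^0(L) \to H^0(L_a)$. Let $u \in H^0(L_a)$ be the image of the generator. Lemma~\ref{lem:vanishing-capacities} forces $c_+^{L,a}(u) = 0$, so the work reduces to showing by direct calculation that $c_+^{L,a}(u)$ is strictly negative.

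By Lemma~\ref{lem:cap-calc} and the degree of $u$, only critical points of the difference function $\Delta_a$ that lie in $\mathcal{P}_+$, have index $N+1$, and have negative critical value can determine $c_+^{L,a}(u)$. In both $C^{---}(A_1,A_2,A_3)$ and $C^{-+-}(A_1,A_2,A_3)$ the leftmost and rightmost crossings are negative, and the outer lobes have areas $A_1$ and $A_4 = A_1 - A_2 + A_3 > 0$, where $A_4$ is forced by the fact that each slice component bounds zero signed area. At each outer crossing there is a capping path on $L_a$ that wraps around exactly the adjacent outer lobe; locally this is the same configuration that appears in $8^-(A)$ in Theorem~\ref{thm:fig8}(1), so the Maslov-index computation from Section~6 of \cite{josh-lisa:cap} produces critical points of $\Delta_a$ in $\mathcal{P}_+$ of index $N+1$ at critical values $-A_1$ and $-A_4$ respectively.

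It then remains to verify that $\varphi_{a,+}^\lambda(u)$ actually becomes nonzero as $\lambda$ increases past $-\min\{A_1,A_4\}$, which yields $c_+^{L,a}(u) \le -\min\{A_1,A_4\} < 0$ and the desired contradiction with Lemma~\ref{lem:vanishing-capacities}. For $C^{-+-}$ the middle crossing is positive, so its $\mathcal{P}_+$ critical points sit at positive critical values and do not affect the sublevel-set filtration of $\Delta_a$ below $0$. For $C^{---}$ the middle crossing also contributes a $\mathcal{P}_+$ critical point, but any capping path realizing it traverses more than one outer lobe, so either its Maslov index differs from $N+1$ or its critical value is strictly below $-\max\{A_1,A_4\}$; in either case the outer-crossing critical points survive in the filtration below $0$. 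The main obstacle I anticipate is precisely this last step: confirming via the Mayer--Vietoris and long-exact-sequence maps entering the definition of $\varphi_{a,+}^\lambda$ that the outer-crossing critical values genuinely witness the change of $\varphi_{a,+}^\lambda(u)$ from zero to nonzero, and in particular that, in the $C^{---}$ case, this change is not cancelled by the middle-crossing contribution.
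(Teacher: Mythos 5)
Your approach is genuinely different from the paper's, and it has both an arithmetic error and a fundamental gap.

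The paper's proof does not try to show $c_+^{L,a}(u)$ is negative. Instead it shows that \emph{all four} capacities vanish for $u\in H^0(L_a)$ and then invokes Non-Vanishing: $c_+^{L,a}(u)=0$ and $C_-^{L,a}(u)=0$ come for free from Lemma~\ref{lem:vanishing-capacities}, while $c_-^{L,a}(u)=0$ and $C_+^{L,a}(u)=0$ are extracted from the table of critical points of $\Delta_a$ via Lemma~\ref{lem:cap-calc} (and, in the one hard case $C^{---}$ with $A_1>A_2$, a careful analysis of several long exact sequences to rule out a positive value of $C_+$). That strategy only ever uses the \emph{sufficient} condition for vanishing furnished by Lemma~\ref{lem:cap-calc}.

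Your plan instead aims to show $c_+^{L,a}(u)<0$, contradicting Lemma~\ref{lem:vanishing-capacities} directly. Two problems. First, an index error: for $u\in H^0(L_a)$ the capacity $c_+$ is governed by critical points of $\Delta_a$ of index $k+N=N$ (not $N+1$) in $\mathcal{P}_+$ with negative critical value, and indeed the paper computes the outer-crossing critical points $q^1_+$ and $q^3_+$ to have index $N$, not $N+1$. Second, and fatally, the existence of index-$N$ critical points in $\mathcal{P}_+$ with negative critical value is a \emph{necessary} condition for $c_+^{L,a}(u)<0$, not a sufficient one. Lemma~\ref{lem:cap-calc} only tells you when the capacity must vanish; it gives no criterion forcing it to be strictly negative. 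To make your strategy work you would have to reprove, in the more complicated three-crossing caterpillar setting, an analogue of the hard direction of Theorem~\ref{thm:fig8}, i.e.\ that the Mayer--Vietoris and triple-sequence maps composing $\varphi_{a,+}^\lambda$ genuinely kill the class once $\lambda$ passes $-A_1$ or $-A_4$. You flag this as ``the main obstacle,'' and that obstacle is exactly where the argument is missing its content; it is not a checkable detail but a computation of the same difficulty as, or greater than, the one the paper actually performs (and the paper sidesteps it by computing the other two capacities instead, where Lemma~\ref{lem:cap-calc} can be applied cleanly).
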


\begin{proof} Suppose $C^{- \pm -}(A_1, A_2, A_3)$ is the diagram of a
  negative slice of the flat-at-infinity planar Lagrangian $L$.  We
  will show that $c_-^{L,a}(u) = 0$ and $C_+^{L,a}(u) = 0$, for all $u
  \in H^0(L_a)$.  This, together with
  Lemma~\ref{lem:vanishing-capacities}, gives a contradiction to the
  Non-Vanishing property of capacities.

  The difference function $\Delta_a$ associated to $L$ has six
  non-degenerate critical points: $q^1_\pm$ come from the leftmost
  crossing, $q^2_\pm$ come from the center crossing, and $q^3_\pm$
  come from the rightmost crossing.  Using the capping paths described
  in the previous section, we find the following indices, critical
  values, and $\mathcal{P}_\pm$ locations for the critical points of
  $C^{-+-}(A_1, A_2, A_3)$:
  \begin{center}
    \begin{tabular}{l|ccc}
      & Location & Index & Critical Value \\ \hline 
      $q^1_-$ & $\mathcal{P}_-$ & $N+3$ & $A_1$ \\
      $q^1_+$ & $\mathcal{P}_+$ & $N$ & $-A_1$ \\
   $q^2_-$ & $\mathcal{P}_-$ & $N+2$ & $A_1-A_2$ \\ 
   $q^2_+$ & $\mathcal{P}_+$ & $N+1$ & $A_2-A_1$ \\
      $q^3_-$ & $\mathcal{P}_-$ & $N+3$ & $A_1-A_2+A_3$ \\
      $q^3_+$ & $\mathcal{P}_+$ & $N$ & $-A_1+A_2-A_3$
    \end{tabular}
  \end{center}
    The critical points of $C^{---}$ have similar properties, except
  that the indices and critical values of $q^2_\pm$ are swapped.

  Suppose $u \in H^0(L_a)$. For any $C^{-\pm-}(A_1, A_2, A_3)$, since
  both critical points of index $N$ live in $\mathcal P_+$, it follows
  from Lemma~\ref{lem:cap-calc} that $c_-^{L,a}(u) = 0$. Further, for
  all curves of the form $C^{-+-}$, the only non-degenerate critical
  point of index $N+2$ lives in $\mathcal P_-$, and thus it follows
  that $C_+^{L,a}(u) = 0$.

  It remains to show that for all curves of the form $C^{---}$, we
  have $C_+^{L, a}(u) = 0$.  First consider the case where $A_1 \leq
  A_2$.  In this case, the critical point $q^2_-$ has critical value
  $A_1 - A_2 \leq 0$, so it follows from Lemma~\ref{lem:cap-calc} that
  $C_+^{L, a}(u) = 0$.  In the case where $A_1 > A_2$, the critical
  point $q^2_-$ has critical value $A_1 - A_2 > 0$, so a more
  sophisticated argument is required to show that $\Phi_{a, +}^\Lambda
  (u) = 0$, for all $\Lambda > 0$.  Suppose there exists $\Lambda > 0$
  so that $\Phi_{a, +}^\Lambda (u) \neq 0$.  We may assume that
  $\Lambda$ lies between the critical values associated to the points
  of index $N+2$ and $N+3$:
  $$ A_1 - A_2 < \Lambda < \min \{ A_1, A_1 - A_2 + A_3 \}.$$
  By examining a long exact sequence associated to $(\Delta_a^\Lambda,
  \Delta_a^\eta, \Delta_a^{-\eta})$, we see that the assumption that
  $\Phi_{a, +}^\Lambda (u) \neq 0$ implies:
  \begin{equation}  \label{eqn:cap-les-1}
    \rk  H^{N+2}(\Delta_a^\Lambda, \Delta_a^{-\eta}) =  \rk
    H^{N+2}(\Delta_a^\eta, \Delta_a^{-\eta}) = 1.
  \end{equation}
  Next, we examine the long exact sequence associated to
  $(\Delta_a^\theta, \Delta_a^{-\eta}, \Delta_a^{-\theta})$, for some
  $\theta \gg 0$. The fact that $H^*(\Delta_a^\theta,
  \Delta_a^{-\theta}) = 0$ (see Lemma 5.3 of \cite{josh-lisa:cap})
  implies that:
  \begin{equation} \label{eqn:cap-les-2} H^{N+3}(\Delta_a^{\theta},
    \Delta_a^{-\eta}) \simeq H^{N+2}(\Delta_a^{-\eta},
    \Delta_a^{-\theta}) = 0.
  \end{equation}
 Finally, using (\ref{eqn:cap-les-2}) in the long exact sequence
  associated to the triple $(\Delta_a^\theta, \Delta_a^{\Lambda},
  \Delta_a^{-\eta})$, we find that there is a surjective map from
  $H^{N+2}(\Delta_a^\Lambda, \Delta_a^{-\eta})$ to
  $H^{N+3}(\Delta_a^\theta, \Delta_a^{\Lambda})$.  However, the
  indices and critical values of the critical points of $\Delta_a$
  imply that for $\theta \gg 0$, we have $\rk H^{N+3}(\Delta_a^\theta,
  \Delta_a^\Lambda) = 2$.  Thus, we obtain a surjective map from a
  group of rank $1$ (by (\ref{eqn:cap-les-1})) to a group of rank $2$,
  an impossibility.  It follows that $\Phi_{a, +}^\Lambda (u) = 0$, for
  all $\Lambda$, and hence $C_+^{L, a}(u) = 0$, as desired.
\end{proof}

In contrast to the definitive statements above about caterpillars, we
do not know if the caterpillar $C^{+++}(A_1, A_2, A_3)$ in
Figure~\ref{fig:unknown_cat} can be realized as the diagram of a
generic negative slice of a flat-at-infinity planar Lagrangian.  We
have not been able to reproduce it using our computer-aided
calculations, but capacity arguments as in the proof of
Proposition~\ref{prop:impossible_cat} do not rule out its possibility.

\begin{figure}  
  \centerline{\includegraphics{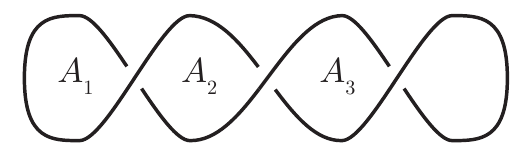}}
  \caption{Can the curve $C^{+++}(A_1, A_2, A_3)$ be realized as a
    generic negative slice of some flat-at-infinity planar Lagrangian?}
  \label{fig:unknown_cat}
\end{figure}

Capacities of slices of Lagrangians that are connect sums can easily
be calculated in terms of the capacities of the pieces:

\begin{lem}  \label{lem:split-capacities}
  For $(u_1, u_2) \in H^0((L_1 \# L_2)_a) = H^0((L_1)_a) \oplus H^0((L_2)_a)$,
  \begin{equation*}
    \begin{split}
      c_\pm^{L_1 \# L_2, a}(u_1, 0) = c_\pm^{L_1 , a}(u_1), \quad
      c_\pm^{L_1 \# L_2, a}(0, u_2) = c_\pm^{L_2 , a}(u_2), \\
      \text{ and } c_\pm^{L_1 \# L_2, a}(u_1, u_2) = \max\{ c_\pm^{L_1
        , a}(u_1), c_\pm^{L_2 , a}(u_2) \}.
    \end{split}
  \end{equation*}
\end{lem}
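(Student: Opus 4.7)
The plan is to construct a generating family for $L_1\#L_2$ that decomposes as the sum of (suitably modified) generating families for $L_1$ and $L_2$, show that the associated difference function splits into stabilized copies of $\Delta_a^{F_1}$ and $\Delta_a^{F_2}$ on the two half-spaces $\{x_1\le 0\}$ and $\{x_1\ge 0\}$, and then use this splitting, via a Mayer--Vietoris argument, to decompose the relevant sublevel-set cohomology into a direct sum. The capacity formulas will then follow by tracking the classes $u_1$ and $u_2$ through this splitting.

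Concretely, choose quadratic-at-infinity generating families $F_i:\rr^2\times\rr^{N_i}\to\rr$ for $L_i$ with fiber quadratic forms $Q_i$. After applying the $x_1$-translations from Definition~\ref{defn:connect-sum}, we may assume $L_1$ (resp. $L_2$) agrees with the zero-section on $\{x_1\ge -\epsilon\}$ (resp.\ $\{x_1\le \epsilon\}$) for some $\epsilon>0$; by replacing each $F_i$ with a compactly supported fiber-preserving modification within its uniqueness class, we may further arrange that $F_1\equiv Q_1(\e_1)$ on $\{x_1\ge-\epsilon\}$ and $F_2\equiv Q_2(\e_2)$ on $\{x_1\le\epsilon\}$. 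Then $F:=F_1+F_2$, viewed as a function on $\rr^2\times\rr^{N_1+N_2}$, is a quadratic-at-infinity generating family for $L_1\#L_2$ of total fiber dimension $N:=N_1+N_2$.

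A direct calculation then shows that, on $\{x_1\le 0\}$, $\Delta_a^F$ equals $\Delta_a^{F_1}$ plus the nondegenerate quadratic $Q_2(\e_2)-Q_2(\tilde\e_2)$, and analogously on $\{x_1\ge 0\}$; in the overlap $|x_1|\le\epsilon$, $\Delta_a^F$ reduces to the sum of the two stabilizing quadratics minus $a(x_2-\tilde x_2)$ and has no critical points at nonzero critical values. Hence the nondegenerate critical points of $\Delta_a^F$ partition into those from $L_1$ and those from $L_2$, preserving critical values and (up to a common shift from stabilization) indices, and the critical submanifold at $0$ is the disjoint union $(L_1)_a\sqcup(L_2)_a$. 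A Mayer--Vietoris argument applied to the cover $\{x_1\le\epsilon\}\cup\{x_1\ge-\epsilon\}$ of the domain of $\Delta_a^F$ then produces a direct-sum decomposition
\[
  H^{*}\bigl(\Delta_{a,\pm}^{F,\eta},\Delta_{a,\pm}^{F,\lambda}\bigr)\;\cong\;H^{*}\bigl(\Delta_{a,\pm}^{F_1,\eta},\Delta_{a,\pm}^{F_1,\lambda}\bigr)\oplus H^{*}\bigl(\Delta_{a,\pm}^{F_2,\eta},\Delta_{a,\pm}^{F_2,\lambda}\bigr),
\]
after matching the degree shifts from stabilization. This splitting is compatible with the Thom isomorphism (applied to the two-component critical submanifold) and with all of the triple-sequence maps, so $\varphi_{a,\pm}^{\lambda}$ decomposes as a direct sum of the corresponding maps for $L_1$ and $L_2$ along $H^{0}((L_1\#L_2)_a)\cong H^{0}((L_1)_a)\oplus H^{0}((L_2)_a)$.

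The formulas for $(u_1,0)$ and $(0,u_2)$ follow immediately from the splitting, since such classes are supported in only one summand. For $(u_1,u_2)$, the identity $\varphi_{a,\pm}^{\lambda}(u_1,u_2)=\varphi_{a,\pm}^{\lambda,L_1}(u_1)\oplus\varphi_{a,\pm}^{\lambda,L_2}(u_2)$ reduces the capacity computation to comparing the $\lambda$-sets on which each summand vanishes; invoking the convention that an empty set contributes the value $0$ then yields the combined formula. The principal technical obstacle is verifying the Mayer--Vietoris step in sufficient detail: although the critical structure of $\Delta_a^F$ splits cleanly, one must carefully track the behavior of the triple-sequence boundary maps across the overlap region $\{|x_1|\le\epsilon\}$ to confirm that the direct-sum decomposition is compatible with every map appearing in the definitions of $\varphi_{a,\pm}^{\lambda}$ and $\Phi_{a,\pm}^{\Lambda}$.
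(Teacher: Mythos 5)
Your proof is correct and captures the same core idea as the paper's: produce a generating family for $L_1\#L_2$ whose difference function localizes the $L_1$- and $L_2$-contributions into disjoint $x_1$-half-planes separated by a critical-point-free strip, then deduce that the capacities localize. The implementation differs in two respects. Where the paper first stabilizes $F_1$ and $F_2$ to share a common fiber $\rr^N$ and a common quadratic $Q$ at infinity, then patches them into a single $F$ that \emph{equals} the translated $F_i$ on each side, you instead take $F=F_1+F_2$ on $\rr^{N_1+N_2}$, so that on each side $\Delta_a^F$ is $\Delta_a^{F_i}$ stabilized by the other factor's quadratic $Q_j(\e_j)-Q_j(\tilde\e_j)$; this avoids normalizing $Q_1$ and $Q_2$ to a common form at the cost of tracking an index shift, which you correctly note. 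Where the paper argues informally that gradient trajectories of $\Delta_a$ localize the capacities, you propose a Mayer--Vietoris decomposition of the relative sublevel-set cohomology over the cover $\{x_1\le\epsilon\}\cup\{x_1\ge-\epsilon\}$ and track compatibility with the Thom and triple-sequence maps. Both routes rest on the same geometric input (no critical points in the transition strip, and the slice projecting away from it) and carry comparable unspoken bookkeeping; your Mayer--Vietoris formulation makes the splitting of $\varphi_{a,\pm}^\lambda$ and $\Phi_{a,\pm}^\Lambda$ more explicit, and your identification of that compatibility check as the technical bottleneck is exactly where the paper's proof is equally terse.
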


\begin{proof} By Lemma~\ref{lem:connect-sum-defn} and Invariance, it
  suffices to work with a connect sum obtained by choosing the compact
  sets $K_i$ in Definition~\ref{defn:connect-sum} quite large.  Using
  stabilization, fiber-preserving diffeomorphism, and addition of constants,
   we can assume
  that there exist generating families for $L_1$ and $L_2$ that agree
  with the same quadratic function outside $K_1$ and $K_2$.  Thus
  there exists a generating family $F(x_1, x_2, \e)$ for $L_1 \# L_2$
  so that, for some $\epsilon > 0$, $F(x_1, x_2, \e)$ generates the
  $x_1$-translate of $L_1$ on $\{x_1 < -\epsilon\}$; $F(x_1, x_2, \e)$
  generates the $x_1$-translate of $L_2$ on $\{x_1 > \epsilon\}$, and
  on $|x_1| < \epsilon$, $F(x_1, x_2, \e)$ agrees with a quadratic
  function $Q(\e)$.  It follows that on $x_1 < -\epsilon$,
  $x_1$-translates of the gradient trajectories used to define
  $c_\pm^{L_1, a}(u_1)$ will define $c_\pm^{L_1\#L_2, a}(u_1, 0)$, and
  hence the value $c_\pm^{L_1, a}(u_1)$ agrees with $c_\pm^{L_1\#L_2,
    a}(u_1, 0)$.  An analogous argument holds on $x_1 > \epsilon$.
  The claimed calculation of $c_\pm^{L_1 \# L_2, a}(u_1, u_2)$ follows
  from the definition.
 \end{proof}

Using this, we can show that the result of repositioning the two
figure-8 curves of opposite crossings in
Figure~\ref{fig:possible-slices}(d) so that they are configured as in
Figure~\ref{fig:impossible-connect-sum} cannot be realized as the
diagram of a
negative slice of a connect sum of Lagrangians:

\begin{cor} For any $A, B > 0$, $8^-(A) + 8^+(B)$ cannot be realized
  as the diagram of a generic negative slice of any connect sum of flat-at-infinity
  planar Lagrangians.
\end{cor}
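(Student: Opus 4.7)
My plan is to reduce the statement directly to the obstruction established at the start of Section~\ref{ssec:slice-obstr}: the diagram $8^-(A)$ cannot itself be realized as a generic negative slice of any flat-at-infinity planar Lagrangian (an immediate consequence of Theorem~\ref{thm:fig8} and Lemma~\ref{lem:vanishing-capacities}). Once I split the hypothesized slice into contributions from each summand of the connect sum, the $8^-(A)$ piece alone will violate this earlier obstruction.

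Suppose for contradiction that some connect sum $L_1 \# L_2$ of flat-at-infinity planar Lagrangians has a generic negative slice at level $a < 0$ with diagram $8^-(A) + 8^+(B)$. By Definition~\ref{defn:connect-sum}, after $x_1$-translations $\tau$ and $\tau'$, the Lagrangian $\tau(L_1)$ agrees with the zero section on $\{x_1 \geq 0\}$ and $\tau'(L_2)$ agrees with the zero section on $\{x_1 \leq 0\}$. Since the zero section lies in $\{y_1 = y_2 = 0\}$, it does not meet the hyperplane $\{y_2 = a\}$ for $a < 0$, so the slice decomposes as a disjoint union
\[ (L_1 \# L_2)_a = (\tau(L_1))_a \sqcup (\tau'(L_2))_a, \]
with the two pieces lying in $\{x_1 < 0\}$ and $\{x_1 > 0\}$ respectively.

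Because our addition convention places the first summand in $\{x_1 < 0\}$, the piece $(\tau(L_1))_a$ has diagram $8^-(A)$, and hence so does $(L_1)_a$ up to the area-preserving translation $\tau$. But $L_1$ is itself a flat-at-infinity planar Lagrangian, so $(L_1)_a$ would be a generic negative Lagrangian slice whose diagram is $8^-(A)$, contradicting the earlier obstruction. The only step to verify is the splitting of the slice --- i.e., that the ``tails'' of $\tau(L_1)$ and $\tau'(L_2)$ along the zero section contribute nothing at a negative level --- and this is immediate from the flat-at-infinity condition. I do not anticipate any substantive obstacle, and in particular no new capacity computation is required beyond Theorem~\ref{thm:fig8}.
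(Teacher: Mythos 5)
Your approach is genuinely different from the paper's. The paper applies Lemma~\ref{lem:split-capacities} to compute $c_\pm^{L_1 \# L_2, a}(j_a^*u)$ as a max of the capacities from Theorem~\ref{thm:fig8}, then uses index counts and Lemma~\ref{lem:cap-calc} to kill $C_\pm$, and finally contradicts Non-Vanishing. You instead observe that the connect-sum slice splits as $(\tau(L_1))_a \sqcup (\tau'(L_2))_a$ and reduce directly to the already-established non-realizability of $8^-(A)$ alone. Your route is shorter and more conceptual: it trades the full capacity machinery for a single structural observation about connect sums.

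Two points need tightening, one minor and one that the paper itself also elides. First, the minor one: the hypothesis only says $(L_1 \# L_2)_a$ is \emph{equivalent} to a representative of $8^-(A) + 8^+(B)$; area-preserving diffeomorphisms of the plane certainly need not respect the half-planes $\{x_1 \lessgtr 0\}$, so you cannot conclude that $(\tau(L_1))_a$, rather than $(\tau'(L_2))_a$, is the $8^-$ component. What the equivalence does preserve is the set of components together with their crossing signs and lobe areas, so one of the two pieces $(\tau(L_1))_a$, $(\tau'(L_2))_a$ must be a curve equivalent to $8^-(A)$ \emph{provided each piece contains exactly one component}; in either case one of $L_1$, $L_2$ has a negative slice with diagram $8^-(A)$ and you are done. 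Second, the subtler issue: nothing in Definition~\ref{defn:connect-sum} forbids, say, $(L_2)_a = \emptyset$ with both components landing in $(\tau(L_1))_a$. In that degenerate case your reduction stalls (you would only learn that $L_1$ itself has slice $8^-(A) + 8^+(B)$, which is exactly what the remark following the corollary declines to rule out). The paper's own argument silently makes the same one-component-per-piece assumption --- its notation $j_a^*u = (u,u)$ in $H^0((L_1)_a) \oplus H^0((L_2)_a)$ and the subsequent use of Theorem~\ref{thm:fig8} presuppose it --- so this is best read as a hypothesis implicit in the corollary (the $\#$ actually separates the two curves) rather than a defect peculiar to your proof. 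With the symmetry fix made explicit and that implicit hypothesis acknowledged, your argument is correct and is a clean alternative to the paper's capacity computation.
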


\begin{proof} Suppose there exists $L_1 \# L_2$ and an $a < 0$ so that
  $(L_1 \# L_2)_a$ has diagram $8^-(A) + 8^+(B)$.  Then the inclusion
  of the slice induces $j_a^*u = (u, u)$, for all $u \in H^0(L)$.  By
  Lemma~\ref{lem:split-capacities} and Theorem~\ref{thm:fig8}, it
  follows that $c_{\pm}^{L_1 \# L_2, a} (j_a^*u) = 0$.  By index
  calculations and Lemma~\ref{lem:cap-calc}, $C_{\pm}^{L_1 \# L_2, a}
  (j_a^*u) = 0$.  Thus we get a contradiction to Non-Vanishing.
\end{proof}

\begin{figure}  
  \centerline{\includegraphics{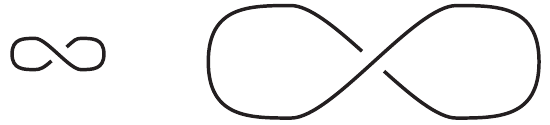}}
  \caption{This pair of curves $8^-(A) + 8^+(B)$ cannot be realized as
  the diagram of 
    a negative slice of any connect sum of flat-at-infinity planar
    Lagrangians.  Can it be
    realized as a negative slice of some flat-at-infinity planar
    Lagrangian? }
  \label{fig:impossible-connect-sum}
\end{figure}

On the other hand, capacity arguments do not rule out the possibility
of $8^-(A) + 8^+(B)$ appearing as the diagram of  a negative slice of {\it any} flat-at-infinity
planar Lagrangian.

\subsection{Obstructions to the Existence of Cobordisms}

We continue our exploration of the use of capacities as obstructions
by proving parts (1) and (2) of Theorem~\ref{thm:rel-props}.
Combining Theorem~\ref{thm:fig8} and Monotonicity yields part (1) of
the theorem: for all $0 < A < B$, we have:
$$[\t 8^+(A)] \rel [\t 8^+(B)], \text{ while } [\t 8^+(B)] \not\rel [\t 8^+(A)],$$
where $\t 8^+(A), \t 8^+(B)$ are any slices with diagrams $8^+(A),
8^+(B)$.

To prove part (2), namely that for all positive areas $A$ and $B$,
$$[\t 8^+(A)] \notrel
[\t C^{+-+}(A,B,B)] \text{ and } [\t C^{+-+}(A,B,B)] \notrel [\t
8^+(A)],$$ for any lifts $\t 8^+(A)$ and $\t C^{+-+}(A,B,B)$, we use
Monotonicity, Theorem~\ref{thm:fig8}, and the following capacity
computation:

\begin{lem} \label{lem:capcat} If $L$ is any flat-at-infinity planar
  Lagrangian with negative slice $L_a$ that has diagram $C^{+-+}(A, B, B)$ then, for $0 \neq u
  \in H^0(L_a)$, $c_-^{L, a} (u) = -A$.
\end{lem}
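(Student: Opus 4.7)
The plan is to compute $c_-^{L,a}(u)$ directly from the Morse--Bott critical data of the difference function $\Delta_a$ of any generating family for $L$, closely paralleling the argument used for $8^+(A)$ in Theorem~\ref{thm:fig8}. First, I would catalogue the critical points of $\Delta_a$ in $\mathcal{P}_-$ coming from the three crossings, using the capping-path and Maslov-index recipe of Section~\ref{subsec:capacities} (with the sign conventions of the $C^{-+-}$ table in Proposition~\ref{prop:impossible_cat} inverted at each crossing whose sign has flipped). Since $A_4 = A - B + B = A$, the two positive crossings contribute critical points $q^1_-$ and $q^3_-$ in $\mathcal{P}_-$, each of index $N$ and critical value $-A$, while the central negative crossing contributes $q^2_-$ in $\mathcal{P}_-$ of index $N+1$ and critical value $B - A$. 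The Morse--Bott critical submanifold, diffeomorphic to the connected curve $L_a$, sits at level $0$ with index $N+1$.

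Next, I would show that $\varphi_{a,-}^\lambda(u) \neq 0$ for all $\lambda$ in $(-A, 0)$. When $B \geq A$, there are no critical points of $\Delta_a|_{\mathcal{P}_-}$ in $(\lambda, -\eta)$, so the triple long exact sequence forces the map $p \colon H^{N+1}(\Delta^\eta_{a,-}, \Delta^{-\eta}_{a,-}) \to H^{N+1}(\Delta^\eta_{a,-}, \Delta^\lambda_{a,-})$ to be an isomorphism and the Thom image of $u$ survives. When $B < A$, only $q^2_-$ lies in that range, contributing only in degree $N+1$ on the left of the sequence; there is no index-$N$ generator to hit the Thom image of $u$ under the connecting map, so again $p(u) \neq 0$.

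Finally, I would show that $\varphi_{a,-}^\lambda(u) = 0$ for all $\lambda < -A$. In the long exact sequence of the triple $(\Delta^\eta_{a,-}, \Delta^{-\eta}_{a,-}, \Delta^\lambda_{a,-})$,
\begin{equation*}
H^{N}(\Delta^{-\eta}_{a,-}, \Delta^\lambda_{a,-}) \xrightarrow{\delta} H^{N+1}(\Delta^\eta_{a,-}, \Delta^{-\eta}_{a,-}) \xrightarrow{p} H^{N+1}(\Delta^\eta_{a,-}, \Delta^\lambda_{a,-}),
\end{equation*}
the left-hand group contains generators supported on $q^1_-$ and $q^3_-$. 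A local Morse model at each positive crossing --- identical to the one already used for $8^+(A)$ in the proof of Theorem~\ref{thm:fig8} --- shows that $\delta$ carries each $q^i_-$ to a generator of $H^{N+1}(\Delta^\eta_{a,-}, \Delta^{-\eta}_{a,-}) \cong H^0(L_a) \cong \zz$. Hence $\delta$ is surjective and $p(u) = 0$, so $\varphi_{a,-}^\lambda(u) = 0$.

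The principal obstacle will be the bookkeeping in the subcase $B < A$, where $q^2_-$ lies in $(-A, 0)$ and both an index-$N$ and an index-$(N+1)$ generator appear in $H^*(\Delta^{-\eta}_{a,-}, \Delta^\lambda_{a,-})$ for $\lambda < -A$. A rank-comparison argument in the spirit of the $C^{---}$ case of Proposition~\ref{prop:impossible_cat} --- comparing dimensions across several triple long exact sequences and invoking the vanishing $H^*(\Delta^\theta_a, \Delta^{-\theta}_a) = 0$ for large $\theta$ from Lemma~5.3 of \cite{josh-lisa:cap} --- should rule out any interference from a Morse coboundary relation between $q^1_-, q^3_-$ and $q^2_-$, preserving the surjectivity of $\delta$ and confirming $c_-^{L,a}(u) = -A$.
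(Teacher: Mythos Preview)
Your catalogue of the critical data is correct, and the plan would eventually succeed, but the paper takes a markedly shorter indirect route that you should be aware of.  Rather than analysing $\varphi_{a,-}^\lambda$ on both sides of $-A$, the paper first disposes of the \emph{other three} capacities: $c_+^{L,a}(u)=0$ and $C_-^{L,a}(u)=0$ follow immediately from Lemma~\ref{lem:cap-calc} (the index-$N$ points lie in $\mathcal{P}_-$ and the index-$(N+2)$ point lies in $\mathcal{P}_+$), and $C_+^{L,a}(u)=0$ follows by the very rank argument from the $C^{---}$ case of Proposition~\ref{prop:impossible_cat} that you propose to invoke for your obstacle.  Then Non-Vanishing forces $c_-^{L,a}(u)\neq 0$, and since the only index-$N$ critical value in $\mathcal{P}_-$ is $-A$, the conclusion $c_-^{L,a}(u)=-A$ is immediate.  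No analysis of the connecting map $\delta$ or of the local Morse model near the positive crossings is needed.

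By contrast, your Part~3 asks you to prove directly that $\delta$ hits the Thom class.  The local $8^+(A)$ model tells you what $\delta(q^i_-)$ looks like near each crossing, but it does not by itself guarantee that the image generates the global $H^{N+1}(\Delta^\eta_{a,-},\Delta^{-\eta}_{a,-})\cong H^0(L_a)$; in the $B<A$ subcase a possible Morse coboundary $q^1_-,q^3_-\to q^2_-$ could shrink $H^N(\Delta^{-\eta}_{a,-},\Delta^\lambda_{a,-})$ and obscure surjectivity.  You correctly flag this and propose a rank comparison to resolve it, which can indeed be made to work---but that is essentially the same rank argument the paper uses once, for $C_+$, after which Non-Vanishing finishes the job in one line.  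So your approach is sound but does strictly more work: the paper trades your direct computation of $c_-$ for a cheaper vanishing of $C_+$ plus an appeal to Non-Vanishing.
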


\begin{proof} As in the analysis of $C^{-+-}(A, B, B)$ in the proof of
  Proposition~\ref{prop:impossible_cat}, the difference function
  $\Delta_a$ has six critical points whose indices and critical values
  are the same as before, but with the roles of $\mathcal{P}_+$ and
  $\mathcal{P}_-$ reversed.  Let $u \in H^0(L_a)$.  By
  Lemma~\ref{lem:cap-calc}, we know that $c_+^{L, a} (u) =
  0 = C_-^{L, a} (u)$.  By a similar argument to the one in
  Proposition~\ref{prop:impossible_cat}, $C_+^{L,a}(u) = 0$.  By
  Non-Vanishing, it must be the case that $c_-^{L,a}(u) \neq 0$.
  Further, since the critical value for both of the index $N$ critical
  points is $-A$, we must have $c_-^{L,a}(u) = -A$, as desired.
\end{proof}

The proof of Theorem~\ref{thm:add-rel}(1), namely that the relation
$\rel$ is not compatible with the addition $+$, also relies on a
capacity computation:
  
\begin{lem} \label{lem:8+8} Let $L$ be any flat-at-infinity planar
  Lagrangian having a generic height $a<0$ so that  $L_a$ has
  diagram $8^+(A) +
  8^+(A)$.  Then, for any nonzero $u \in H^0(L_a)$,  $c_-^{L,
    a} (u) = -A$.
\end{lem}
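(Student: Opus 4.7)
The plan mirrors the proof of Lemma~\ref{lem:capcat} for the caterpillar $C^{+-+}(A,B,B)$: enumerate the critical points of the difference function $\Delta_a$, kill three of the four capacities using Lemma~\ref{lem:cap-calc}, and pin down the remaining one by combining Non-Vanishing with the fact that capacities are always critical values of $\Delta_a$.

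First, I would identify the critical points of $\Delta_a$. Critical points of $\Delta_a$ are in bijection with pairs of points of $L_a$ projecting to the same point of the $x_1y_1$-plane, together with a single Bott critical submanifold diffeomorphic to $L_a$ at critical value $0$. Since the two figure-$8$ components of $8^+(A) + 8^+(A)$ are disjoint in their $x_1y_1$-projections (by the very definition of $+$), the only double points of $\proj(L_a)$ are the two self-crossings, one per component; no mixed double points appear. Each self-crossing therefore contributes two non-degenerate critical points $q^i_\pm$ ($i=1,2$) whose indices, critical values, and $\mathcal{P}_\pm$ locations are identical to those of the two critical points of a single $8^+(A)$ diagram. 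From the index/value computations underlying Theorem~\ref{thm:fig8} (equivalently, the caterpillar table in the proof of Proposition~\ref{prop:impossible_cat}, with signs swapped because both crossings are positive), we obtain $q^i_-$ in $\mathcal{P}_-$ with index $N$ and critical value $-A$, and $q^i_+$ in $\mathcal{P}_+$ with index $N+3$ and critical value $A$.

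Next, fix a nonzero $u \in H^0(L_a)$, so $k=0$. Three applications of Lemma~\ref{lem:cap-calc} dispose of three of the four capacities: $c_+^{L,a}(u) = 0$ since no critical point of index $N$ lies in $\mathcal{P}_+$ (the two present there have index $N+3$); $C_+^{L,a}(u) = 0$ since no critical point of index $N+2$ lies in $\mathcal{P}_+$; and $C_-^{L,a}(u) = 0$ since no critical point of index $N+2$ lies in $\mathcal{P}_-$ either. Non-Vanishing then forces $c_-^{L,a}(u) \neq 0$. Since every capacity is a critical value of $\Delta_a$ with $c_-^{L,a}(u) \leq 0$, and the only strictly negative critical value of $\Delta_a$ is $-A$, we conclude $c_-^{L,a}(u) = -A$.

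I do not expect a serious obstacle. Unlike Lemma~\ref{lem:capcat}, the index $N+2$ slot in $\mathcal{P}_+$ is empty here, so the Mayer--Vietoris/long-exact-sequence argument used there for $C_+^{L,a}$ is not needed. The only point warranting care is the claim that no extra critical points of $\Delta_a$ arise from mixed interactions between the two components of $L_a$, and this is immediate from the disjointness of the two figure-$8$ projections together with the description of critical points of $\Delta_a$ recalled above.
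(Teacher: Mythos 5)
Your proof is correct and follows the same strategy as the paper's: enumerate the four non-degenerate critical points (two in $\mathcal{P}_+$ with index $N+3$ and value $A$, two in $\mathcal{P}_-$ with index $N$ and value $-A$), kill $c_+^{L,a}$, $C_+^{L,a}$, and $C_-^{L,a}$ via Lemma~\ref{lem:cap-calc}, and use Non-Vanishing together with the fact that capacities are critical values to conclude $c_-^{L,a}(u) = -A$. Your explicit observation that the disjointness of the two figure-$8$ projections rules out mixed double points is a helpful detail the paper leaves implicit.
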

 
\begin{proof} For the difference function $\Delta_a$, there are four
  non-degenerate critical points: two in $\mathcal P_+$ with value $A$
  and index $N+3$ and two in $\mathcal P_-$ with value $-A$ and index
  $N$.  By these calculations, it follows that for all $u \in
  H^0(L_a)$, $c_+^{L,a}(u) = 0$, $C_+^{L,a}(u) = 0$, and $C_-^{L,a}(u)
  = 0$.  Then by the Non-Vanishing property of capacities, for any
  nonzero $u\in H^0(L_a)$, $c_-^{L,a}(u) \neq 0$.  Since
  $c_-^{L,a}(u)$ must be a critical value associated to a critical
  point of index $N$, we then know that $c_-^{L,a}(u) = -A$.
\end{proof}
 
Theorem~\ref{thm:fig8}, Lemma~\ref{lem:8+8}, and Monotonicity then
imply Theorem~\ref{thm:add-rel}(1): for any $A > 0$,
  $$ [\t 8^+(A)] \not\rel [\t 8^+(A)] + [\t 8^+(A)].$$

\section{$(\cslices, \rel)$ as a Partially Ordered Set}
\label{sec:po}

As stated in Theorem~\ref{thm:rel-props}(3), the restriction of the
relation $\rel$ to the set $\cslices$ of connected slices is a partial
order, i.e. $\rel$ is reflexive, transitive, and antisymmetric.
Reflexivity is obvious from the definition.  The proofs of
transitivity and antisymmetry both require us to glue together two
Lagrangians along a common slice.  The first step in this endeavor is
to prove that any two collar neighborhoods of a generic slice are
equivalent:

\begin{lem} \label{lem:collar} Let $L, L' \subset \rr^{4}$ be two
  flat-at-infinity planar Lagrangians that are transverse to and agree
  on $\{ y_2 = a \}$:
  $$S = L_a = L'_a.$$
  Then, for all $\epsilon>0$, there exist neighborhoods $V \subset U$
  of $S$ in $L$ and a symplectic isotopy $\Phi_t$ of $\rr^4$ so that
  $\Phi_t|_{L}$ is the identity on $S$ and on the complement of $U$,
  $\Phi_1(U) \subset \{a-\epsilon < y_2 < a+\epsilon\}$, and
  $\Phi_1(V) \subset L'$.
\end{lem}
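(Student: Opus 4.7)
The plan is to reduce the problem to a standard model via the Weinstein Lagrangian neighborhood theorem, identifying a neighborhood of $L$ in $\rr^4$ with a neighborhood of the zero section in $T^*L$ so that $L'$ becomes the graph of $df$ for some function $f$ on a neighborhood of $S$ in $L$ vanishing to second order along $S$. A cut-off Hamiltonian lift of $f$ then carries a portion of the zero section onto this graph while fixing $S$.

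First, I would apply the Weinstein Lagrangian neighborhood theorem to $L$ to obtain a symplectomorphism $\Psi$ from an open neighborhood $\mathcal{N}$ of $L$ in $\rr^4$ onto an open neighborhood of the zero section in $T^*L$, sending $L$ to the zero section. Since $L$ is transverse to $\{y_2 = a\}$ along $S$, by shrinking $\mathcal{N}$ in the fiber direction near $S$ we may also arrange that a sufficiently thin tube around the zero section over a small tubular neighborhood of $S$ in $L$ pulls back via $\Psi^{-1}$ into the slab $\{|y_2 - a| < \epsilon\}$. Then, shrinking $\mathcal{N}$ further if needed, $\Psi(L' \cap \mathcal{N})$ is transverse to the cotangent fibers of $T^*L$ near $S$, hence the graph of a closed $1$-form $\alpha$ on a tubular neighborhood $U$ of $S$ in $L$. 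Because $L' \supset S$, the form $\alpha$ vanishes pointwise along $S$; in particular the period of $\alpha$ around each circle component of $S$ is zero, so $\alpha = df$ for some smooth $f : U \to \rr$ with $f|_S \equiv 0$ and $df|_S \equiv 0$.

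Second, I would pick a smaller tubular neighborhood $V \subset U$ of $S$ in $L$ and a smooth cutoff function $\chi$ on $T^*L$ equal to $1$ on a tube about the zero section over $V$, vanishing outside a tube about the zero section over $U$, and supported in $\Psi(\mathcal{N} \cap \{|y_2 - a| < \epsilon\})$. The Hamiltonian $H = \chi \cdot (f \circ \pi)$, where $\pi : T^*L \to L$ is the bundle projection, generates a compactly-supported Hamiltonian isotopy $\t\phi_t$ of $T^*L$ whose time-$1$ map carries the zero section over $V$ into the graph of $df$, is the identity on $S$ and on the zero section outside $U$ (since $f$ and $df$ vanish along $S$ and $\chi$ vanishes outside $U$), and has support contained in $\Psi(\mathcal{N} \cap \{|y_2 - a| < \epsilon\})$. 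Transporting $\t\phi_t$ back through $\Psi$ and extending by the identity on $\rr^4 \setminus \mathcal{N}$ yields the desired symplectic isotopy $\Phi_t$ of $\rr^4$.

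The main technical hurdle is the simultaneous coordination of scales --- the sizes of $\mathcal{N}$, $U$, $V$, and the support of $\chi$ must be chosen so that $U \subset L$ itself lies in the slab $\{|y_2 - a| < \epsilon\}$ (possible by transversality of $L$ to $\{y_2 = a\}$, provided $U$ is a sufficiently thin tubular neighborhood of $S$ in $L$) and so that the fiber motion generated by the Hamiltonian flow does not push $\Phi_1(U)$ out of the slab. This is ensured by demanding that the support of $\chi$ lie in the preimage of the slab under $\Psi^{-1}$, i.e., that the Weinstein chart near $S$ be chosen thin enough in the fiber direction to remain in the slab.
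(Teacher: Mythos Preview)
Your argument is correct and follows the same outline as the paper's proof: straighten $L$ to a zero section, recognize $L'$ near $S$ as the graph of an exact $1$-form $df$ with $f$ and $df$ vanishing along $S$, and then push the zero section onto this graph by the Hamiltonian flow of a cut-off of $f$. The one substantive variation is the tool used to straighten $L$: the paper invokes a result of Eliashberg--Polterovich giving a global symplectomorphism of $\rr^4$ carrying $L$ to the zero section $L_0 \subset T^*\rr^2$, whereas you use the Weinstein Lagrangian neighborhood theorem to get a local model near $L$ in $T^*L$. Your route is more elementary and self-contained (it does not rely on the cited theorem and works for any Lagrangian, not just flat-at-infinity planes), at the cost of having to carry along the chart $\Psi$ and extend by the identity; the paper's global straightening lets it write the isotopy explicitly as $\varphi_t(\x,\y)=(\x,\y+t\,d(\rho g)(\x))$ in the ambient $\rr^4$, with the cut-off $\rho$ living on the base rather than on the total space. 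Either way the key estimate is the same: because $g$ (your $f$) vanishes to second order along $S$, the cut-off can be chosen so that $d(\rho g)$ is as small as desired, which is exactly what confines $\Phi_1(U)$ to the $\epsilon$-slab.
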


\begin{proof}
By a result of Eliashberg and Polterovich \cite{ep:local-knots},
  there is a symplectomorphism  $\psi$ of $\rr^{4}$ taking
  $L$ to $L_0$, the zero-section of $\rr^4 = T^*\rr^2$.  Let
  $\gamma$ be the image of $S$ under $\psi$.  
     Let $A$ be the disjoint union of annuli around the components of $S$ in
  $L'$.   By the transversality assumption, for sufficiently small $A$, the
  Lagrangian $G = \psi(A)$ will be the graph of a
  $1$-form, necessarily closed, over a neighborhood $C$ of $\gamma$.
  This $1$-form  vanishes on each component of $\gamma$, and so we can
  assume $G$ is the graph of an exact $1$-form $dg$. Since there exists
  $g$ so that both $g$ and $dg$ vanish on $\gamma$, for  an
  arbitrary $\delta>0$, we can construct a bump function $\rho$
  so that $\| d(\rho g) \| < \delta$. 
    Now consider the symplectic isotopy $\varphi_t$ of $T^*\rr^2$ given
  by $\varphi_t (\x, \y) = (\x, \y + t \,d(\rho g) (\x))$. We may choose
   this isotopy to displace $L_0$ by
  as little as we want.  Further, we have $\varphi_t(\gamma) =
  \gamma$, $\varphi_1(L_0)$ agrees with $G$ in a neighborhood of
  $\gamma$, and $\varphi_t|_{L_0} =
  \id$ outside $C$.  
  Then $\Phi_t = \psi^{-1} \varphi_t \psi$ is the desired
  isotopy since we may assume that we have chosen $C$ and $\delta$
  small enough so that $\Phi_t(U) \subset \{a-\epsilon < y_2 <
  a+\epsilon\}$.
\end{proof}

Thus, we obtain the following gluing construction for flat-at-infinity
planar Lagrangians meeting at a connected slice:\footnote{This is the
  only place where we use the connectivity assumption.}

\begin{prop} \label{prop:gluing} Let $L, L' \subset \rr^{4}$ be two
  flat-at-infinity planar Lagrangians that are transverse to and agree
  on $\{ y_2 = a \}$:
  $$S = L_a = L'_a.$$
  If $S$ is connected then, for all $\epsilon>0$,
  there exists a flat-at-infinity planar Lagrangian $L''$ such that:
  \begin{enumerate}
  \item $L'' \cap \{y_2 < a - \epsilon \} = L \cap \{y_2 < a -
    \epsilon \}$ and 
  \item $L'' \cap \{y_2 > a + \epsilon \} = L' \cap \{y_2 > a +
    \epsilon \}$.
  \end{enumerate}
\end{prop}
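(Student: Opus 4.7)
The plan is to first perturb $L$ so that it coincides with $L'$ on a full open neighborhood of the slice at height $a$, and then to cut and reglue the two Lagrangians along $\{y_2 = a\}$. To carry out the perturbation, I apply Lemma~\ref{lem:collar} to the pair $(L, L')$ with the given $\epsilon$. This produces nested neighborhoods $V \subset U$ of $S$ in $L$ and a compactly supported symplectic isotopy $\Phi_t$ of $\rr^4$ with $\Phi_1|_L$ equal to the identity on $S$ and outside $U$, $\Phi_1(U) \subset \{a - \epsilon < y_2 < a + \epsilon\}$, and $\Phi_1(V) \subset L'$. Setting $\t L := \Phi_1(L)$, we obtain a new flat-at-infinity planar Lagrangian which agrees with $L$ outside the slab and which coincides with $L'$ on the open neighborhood $W := \Phi_1(V)$ of $S$ in $\t L$.

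Next, by shrinking $V$ if necessary, I arrange that $\t L \cap \{y_2 = a\} = S$ and that $W$ contains all of $\t L \cap \{|y_2 - a| < \delta\}$ for some $\delta > 0$. This uses compactness of $S$ together with transversality of $L$ to $\{y_2 = a\}$ away from $S$, so that far from $S$ neither $\t L$ nor $L'$ reaches the hyperplane. I then define
\[
L'' := \bigl(\t L \cap \{y_2 \le a\}\bigr) \cup \bigl(L' \cap \{y_2 \ge a\}\bigr).
\]
Because $\t L$ and $L'$ coincide on the open set $W$ containing $S$, this union is a smooth embedded surface; it is Lagrangian since the condition is local and each piece is; and it is flat-at-infinity since $\t L$ agrees with $L_0$ outside a compact set below $y_2 = a$ and $L'$ agrees with $L_0$ outside a compact set above. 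Conditions (1) and (2) of the proposition are then immediate, provided $\epsilon$ was chosen small enough at the outset.

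The crux of the argument, and its main obstacle, is verifying that $L''$ is planar; this is where the hypothesis that $S$ is connected enters. Since $\t L \cong \rr^2$ is flat-at-infinity and $L_0 \subset \{y_2 = 0\}$, the single end of $\t L$ lies in $\{y_2 \ge 0\} \subset \{y_2 > a\}$, so the simple closed curve $S$ separates the plane $\t L$ into a compact region contained in $\{y_2 \le a\}$, which is a topological disk, and a complementary unbounded region in $\{y_2 \ge a\}$. Applying the same dichotomy to $L'$, the piece $L' \cap \{y_2 \ge a\}$ is the unbounded region, diffeomorphic to $\rr^2$ minus an open disk. Gluing a disk to a plane-minus-a-disk along their common boundary circle $S$ reproduces $\rr^2$, so $L'' \cong \rr^2$. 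Were $S$ disconnected, the compact and unbounded pieces cut from $\t L$ and from $L'$ could exhibit incompatible topologies (differing numbers of compact components, or differing nesting patterns of the constituent circles), and the glued surface would not in general be a plane.
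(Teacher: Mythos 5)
Your proposal is correct and follows essentially the same route as the paper: apply Lemma~\ref{lem:collar} to match $L$ to $L'$ on a collar of $S$, cut-and-paste along $\{y_2 = a\}$, and invoke the Jordan curve theorem together with the location of the flat end to see that connectivity of $S$ forces the part of $\t L$ below $a$ to be the bounded disk and the part of $L'$ above $a$ to be the unbounded plane-minus-disk, so the glued surface is again planar. The one place you gloss slightly is the claim that ``shrinking $V$'' gives $\t L \cap \{y_2 = a\} = S$: shrinking $V$ alone does not suffice, since $\Phi_1(U\setminus V)$ might still meet the hyperplane; one also needs the freedom noted in Lemma~\ref{lem:collar} to make $\Phi_1$ displace $L$ by an arbitrarily small amount, so that $\Phi_1(U \setminus V)$ stays off $\{y_2=a\}$.
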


\begin{proof}
  The previous lemma shows that we may glue two flat-at-infinity
  planar Lagrangians $L$ and $L'$ along a common slice $S$ to obtain a
  new flat-at-infinity Lagrangian $L''$.  The fact that $S$ is
  connected allows us to use the Jordan curve theorem to show that,
  topologically, the gluing removes and then replaces a disk from the
  upper Lagrangian, thus resulting in another planar Lagrangian. 
\end{proof}

We are now ready to prove transitivity and antisymmetry.

\begin{proof}[Proof of Transitivity]
  The only nontrivial case to prove is when $\mathbf{S}_1 \srel
  \mathbf{S}_2$ and $\mathbf{S}_2 \srel \mathbf{S}_3$.  This means
  that (up to equivalence) there exist flat-at-infinity planar
  Lagrangians $L$ and $L'$ and real numbers $a < b < 0$ and $a' < b' <
  0$ such that for some representatives $S_i \in \mathbf{S}_i$, 
  \begin{align*}
     i_b(S_2) &= L_b  & i_{b'}(S_3) &= L'_{b'} \\
    i_a(S_1) &= L_a &  i_{a'}(S_2) &= L'_{a'}.
  \end{align*}
  By Lemma~\ref{lem:stretch}, we may assume that $b=a'$. We now apply
  Proposition~\ref{prop:gluing} to glue $L$ to $L'$ along $S_2$ to
  obtain a flat-at-infinity planar Lagrangian joining $S_1$ to $S_3$.
\end{proof}

\begin{proof}[Proof of antisymmetry]
  To prove antisymmetry, we use the capacities defined in
  Subsection~\ref{subsec:capacities} and argue by contradiction.
  Suppose that $\mathbf{S} \rel \mathbf{S}'$ and $\mathbf{S}' \rel
  \mathbf{S}$, but that $\mathbf{S} \neq \mathbf{S}'$.  It follows
  that, possibly after an equivalence, there are Lagrangians joining
  the representative slices $S$ and $S'$ (and vice versa). Suppose
  that the $x_1y_1$-projection of $S$ has $n$ double points. Using
  Lemma~\ref{lem:move-levels} and Proposition~\ref{prop:gluing}, we
  may construct a flat-at-infinity planar Lagrangian $L$ with slices
  $L_{-1} \sim L_{-2} \sim \cdots \sim L_{-(n+1)} \sim S$.  Note that
  this construction yields difference functions $\Delta_{-k}$, $k=1,
  \ldots, n+1$, that all have the same $2j$ nonzero critical values,
  for some $j \leq n$, with $j$ of them positive and $j$ of them
  negative.  If $u$ is any nonzero class in $H^*(L)$ then we claim
  that $c^{L,-(n+1)}(i^*u) = 0$ for all four capacities, a
  contradiction to the Non-Vanishing property.  To prove the claim,
  notice that Monotonicity implies that:
  \begin{equation*}
    c_-^{L,-1}(i^*u) \leq c_-^{L,-2}(i^*u) \leq \cdots \leq c_-^{L,
      -(n+1)}(i^*u) \leq 0.
  \end{equation*}
  Since these capacities can only take on the $j \leq n$ negative
  critical values or $0$ and each negative critical value can occur at
  most once, we must have $c_-^{L,-(n+1)}(i^*u) = 0$.  An analogous
  argument applied to
  \begin{equation*}
    C_+^{L,-1}(i^*u) \geq C_+^{L,-2}(i^*u) \geq \cdots \geq C_+^{L,
      -(n+1)}(i^*u) \geq 0
  \end{equation*}
  shows that $C_+^{L, -(n+1)}(i^*u) = 0$, and
  Lemma~\ref{lem:vanishing-capacities} shows that the other two
  capacities vanish.
\end{proof}


\bibliographystyle{amsplain} 
\bibliography{main}

\end{document}